\documentclass[review]{elsarticle}

\usepackage{lineno,hyperref}
\modulolinenumbers[5]










\bibliographystyle{mystyle}
\usepackage{geometry}

\usepackage{graphicx,float}
\usepackage{amssymb,amsmath,amsfonts,longtable,bm,epsfig,subfigure,color}
\usepackage{natbib} \bibpunct{(}{)}{,}{a}{,}{,}

\def\Fbar{ {\overline F}}

\def\Hbar{ {\overline H}}
\newtheorem{definition}{Definition}
\newtheorem{theorem}{Theorem}
\newtheorem{proof}{Proof}

\newtheorem{lemma}{Lemma}
\newtheorem{example}{Example}
\newtheorem{remark}{Remark}
\begin{document}
	
	\begin{frontmatter}
		
		\title{Stochastic Comparisons of Second-Order Statistics from Dependent and Heterogenous Modified Proportional  Hazard Rate Observations}
		
		
		\author{Jiale Niu\corref{mycorrespondingauthor}}
		\cortext[mycorrespondingauthor]{Corresponding author}
		\ead{jiale.niu@outlook.com}

		\address{College of Mathematics and Statistics, Northwest Normal University, Lanzhou 730070, China}
		
		\begin{abstract}
			In this manuscript, we study stochastic comparisons of the second-order statistics from dependent or independent observations with modified proportional hazard rates models. First, we establish the usual stochastic order of the second-order statistics from dependent and heterogeneous observations. Second, sufficient conditions are provided in the hazard rate order of the second-order statistics from independent observations. Then, we investigate the hazard rate order of the second-order statistics arising from two sets of independent multiple-outlier modified proportional hazard rates observations. Finally, some numerical examples are given to illustrate the theoretical findings.
		\end{abstract}
		
		\begin{keyword}
			Second-order statistic; Archimedean copula; Majorization; Modified proportional hazard rate model; Stochastic orders.

			\MSC[2010]  Primary 90B25, Secondary 60E15, 60K10
		\end{keyword}

	\end{frontmatter}
	
	\section{Introduction}
	Let $X_{1:n} \leq X_{2:n} \leq \dots \leq X_{n:n}$ denote the order statistics arising from random variables $X_1, X_2,\dots, X_n$.
	The kth order statistic $X_{k:n}$ is well known for being the lifetime of a $(n-k+1)$-out-of-$n$ system, a prominent structure of redundancy in fault-tolerant systems in reliability theory that has been extensively explored in the literature; see, for example, \cite{2006Some} and \cite{2013Comparisons}.
	A parallel system's lifetime corresponds to the largest order statistics $X_{n:n}$, while a series system's lifetime corresponds to the smallest order statistic $X_{1:n}$. Order statistics are thus equivalent to the study of $k$-out-of-$n$ systems, which is a suitable research area in reliability.
	Many fields, including reliability theory, goodness-of-fit tests, auction theory, insurance, actuarial science, operations research, and life testing, use order statistics and statistics closely linked to order statistics.
	One may refer to \cite{2004Order} , and \cite{199810,1998Order} for comprehensive discussions.
	The method of stochastic ordering is useful for measuring the magnitude and variability of random variables. It has been frequently utilized in reliability theory to compare the lifetimes of different systems, in economics to conduct capital allocation and choose the optimal portfolios, and in actuarial science to optimize risk measures or expected utility functions.
	We refer \cite{2010Some}'s excellent monograph for more details on stochastic ordering. \cite{1971Comparisons} were among the first to examine stochastic order statistics comparisons deriving from heterogeneous exponential variables. \cite{1976Stochastic}, \cite{1996Some},\cite{1997Stochastic}, \cite{Khaledi2000SOME}, \cite{2006COMPARISON}, \cite{2007Stochastic,2011On}, \cite{2013ORDERING}, \cite{2014On}, and \cite{2016Comparisons} are just a few of the many researchers who have worked on this topic since then.
	The second-order statistic, which characterizes the lifetime of the $(n-1)$-out-of-$n$ system
	(referred to as the fail-safe system in reliability theory, see Barlow and Proschan, [18]) and yields the
	winner’s price for the bid in the second-price reverse auction (see, for example, \cite{2004Mean,2005A}), 
	has attracted the attention of many researchers for studying its ordering properties. 
	
	A fail-safe system is a significant $k$-out-of-$n$ system, this fault tolerance system is widely applied to the various critical safety system.   
	In the literature, there has been an abundant study on stochastic properties of fail-safe systems. For instance, 
	\cite{Paltanea2008} established the comparison of fail-safe systems with respect to hazard rate order.
	\cite{Zhao2009} obtained the comparison results for two fail-safe systems consisting of heterogeneous and homogeneous components in terms of the likelihood ratio order. 
	\cite{Zhao2010} provided the dispersive order of fail-safe systems with heterogeneous exponential components.
	\cite{Zhao2011} given the right spread order of the fail-safe systems from heterogeneous exponential components.
	\cite{Balakrishnan2015} improved the ordering results for fail-safe systems with exponential components. 
	\cite{cai2017hazard}  compared the hazard rate functions of two fail-safe systems consisting of independent and multiple-outlier proportional hazard rate components. For more the investigations of order statistics, one can may refer to \cite{torrado2015magnitude,torrado2015stochastic,barmalzan2016likelihood,barmalzan2019ordering,das2021some,das2021orderings}.
	
	
	In reliability theory, to model the lifetime data with different hazard shapes, it is desirable to introduce flexible families of distributions, and to this end, there are two methods have been commonly
	used to characterize lifetime distribution with considerable flexibility. One method is to adopt the well-known families of distributions, for example, Gamma, Weibull, and Log-normal, which have been
	studied quite extensively in the literature, for more discussions on this topic, we refer readers to \cite{1994Continuous,1995Continuous,1993Exponentiated}.
	\cite{MARSHALL1997A} developed a new method to introduce one parameter to a base distribution results in a new family of distribution with more flexibility. For example, for a baseline distribution function $F$ with support $\mathbb{R}^+$ and corresponding survival function $\overline{F}$, the new distribution functions can be defined as
	\begin{equation}
		G(x;\alpha)=\frac{F(x)}{1-\overline{\alpha}\overline{F}(x)},\quad x,\alpha\in \mathbb{R}^+,\quad \overline{\alpha}=1-\alpha,
	\end{equation}
	\begin{equation}
		H(x;\alpha)=\frac{\alpha F(x)}{1-\overline{\alpha}F(x)},\quad x,\alpha\in \mathbb{R}^+,\quad \overline{\alpha}=1-\alpha.
	\end{equation}
	\cite{MARSHALL1997A} originally proposed the family of distributions in (1) and studied it for the case when $F$ is a Weibull distribution. When $F$ has probability density and hazard rate functions as $f$
	and $r_F$, respectively, then the hazard rate function of $G$ is given by
	\begin{equation*}
		r_F(x;\alpha)=\frac{1}{1-\overline{\alpha}\overline{F}(x)}r_F(x), \quad x,\alpha\in \mathbb{R}^+,\quad \overline{\alpha}=1-\alpha.
	\end{equation*}
	Therefore, one can observe that if $r_F(x)$ is decreasing (increasing) in x, then for $0<\alpha\leq1(\alpha\geq1)$,
	$r_F(x;\alpha)$ is also decreasing in x. Moreover, one can observe that $r_F(x)\leq r_F(x;\alpha)$ for $0<\alpha\leq1$, and $r_F(x;\alpha)\leq r_F(x)$ for $\alpha\geq1$. For this reason, the parameter $\alpha$ in (1) is referred to as a tilt parameter (see \cite{2007Life}). Note that (1) is equivalent to (2) if $\alpha$ in (1) is changed to $\frac{1}{\alpha}$. The proportional hazard rates (PHR) have important applications in reliability and survival analysis. The random variables $X_1,\cdots, X_n$ are said to follow: PHR model if $X_i$ has the survival function $\overline{F}_{X_i}(x)=\overline{F}^{\lambda_i}(x),i=1,\dots,n$ where $\overline{F}$ is the baseline survival function and $(\lambda_1,\dots, \lambda_n)$ is the frailty vector. It is well-known that the Exponential, Weibull, Lomax and Pareto distributions are special cases of the PHR model. Balakrishnan et al. \cite{2017Modified} introduced two new statistical models by adding a parameter to PHR models, which are regarded as the baseline distributions in $G(x;\alpha)$. The new model is referred to as the modified proportional hazard rates (MPHR). It is given by
	\begin{equation}
		G(x;\alpha,\lambda)=\frac{1-\overline{F}^\lambda(x)}{1-\overline{\alpha}\overline{F^\lambda}(x)},\quad x,\alpha\in \mathbb{R}^+,\quad \overline{\alpha}=1-\alpha,
	\end{equation}
	where $\lambda$ is the proportional hazard rate parameters. We denote $X\sim MPHR(\alpha,\lambda;\overline{F})$ if $X$ has the distribution
	functions $G(x;\alpha,\lambda)$. For the case $\lambda=1$, (3)  simply reduce
	to (1). For the case $\alpha=1$, (3) simply reduce to the PHR model. According to the Theorem 2.1 of Navarro et al. [14], (3) can
	be rewritten the distorted distribution of $h$, where
	\begin{equation}
		h(u;\alpha,\lambda)=\frac{1-(u)^\lambda}{1-\overline{\alpha}(u)^\lambda},\quad u=\overline{F}(x),\quad x,\alpha\in\mathbb{R}^+,\quad \overline{\alpha}=1-\alpha,
	\end{equation}
	if $\lambda=1$, (4) just as the distorted distributions of (1). For some special
	models, please refer to multiple-outlier models (\cite{2016Comparisons,2017Comparisons}), extended exponential and extend Weibull
	distribution (\cite{MARSHALL1997A}, \cite{0Stochastic}), extended Pareto distribution ( \cite{2006Marshall}) and extended Lomax distribution (\cite{2007Marshall}).
	
	However, to the best of our knowledge, there is no study on the ordering properties of the second-order statistics from the heterogeneous and dependent MPHR model. Our purpose in this paper is to investigate the existence of stochastic orderings between the second-order statistics. The rest of the paper is laid out as follows.
	
	The remaining part of the paper is organized as follows: Section 2 recalls some basic concepts
	and notations that will be used in the sequel. Section 3 obtains the usual stochastic order between the second-order statistics under the assumption that the observations are dependent on Archimedean copula. Section 4
	deals with the hazard rate order under the assumption that the observations are independent. Section 5 concludes this paper and gives some remarks.

	\section{Preliminaries}\label{pre}
	In this section, we recall some pertinent definitions and lemmas in the sequel. Throughout, the term ``increasing'' and ``decreasing'' are used in a non-strict  sense. $ h^{-1} (u)$ denotes the inverse function of $h(u)$, and  $h^{'}(u)$ denotes the derivative function of $h(u)$ when they appear. Let $\mathbb R=(-\infty, +\infty)$, $\mathbb R_+=[0, +\infty)$, $\mathbb R_{++}=(0, +\infty)$,  $\mathbb N=\{1,2, \dots, n\}$, $\mathcal D =\{\bm x: x_1\ge x_2 \ge \dots\ge x_n\}$ and $\mathcal I =\{\bm x: x_1\leq x_2 \leq \dots\leq x_n\}$. 
	
	Stochastic order is a very useful tool to compare random variables arising from reliability theory, operations research, actuarial science,  economics, finance, and so on. Let  $X$ and $Y$  be two random variables with distribution functions $F(t)$ and $G(t)$,
	survival functions $\overline  F(t) =1- F(t)$ and $\overline  G(t) =1- G(t)$, probability density functions $f(t)$  and $g(t)$,  hazard rate functions $h_X(t) = f(t) /\overline  F(t) $ and $h_Y(t) = g(t)/\overline  G(t)$, and reversed hazard rate functions $\tilde r_X(t) = f(t) /  F(t)  $ and $\tilde r_Y(t) = g(t)/ G(t)$, respectively. 
	\begin{definition}
		A random variable	 $X$ is said to be smaller than $Y$ in the
		\begin{enumerate} [\rm (i)]
			\item  usual stochastic order (denoted by $X \le_{\rm st} Y$) if  $\overline F(t) \le \overline G(t)$, for $t \in \mathbb R;$
			\item  { hazard rate order} (denoted by $X\leq_{\rm hr}Y$) if
			$h_{X}(t)\ge h_{Y}(t)$ for all $t\in\mathbb{R}$, or  equivalently, if $\overline {G}(t)/\overline {F}(t)$ is increasing in $t\in\mathbb{R};$
			\item  { reversed hazard rate order} (denoted by $X\leq_{\rm rh}Y$) if
			$\tilde r_{X}(t)\le \tilde r_{Y}(t)$ for all $t\in\mathbb{R}$, or equivalently, if $  {G}(t)/  {F}(t)$ is increasing in $t\in\mathbb{R}$.
		\end{enumerate}	
	\end{definition}
	It is well known that the (reversed) hazard rate order implies the usual stochastic order, but the reversed statement is not true in general. For more comprehensive discussions on various stochastic orders and their applications, one may refer to the classical monograph \cite{Shaked2007} and \cite{belzunce2015introduction}.

	Majorization order is an useful tool in establishing various inequalities arising from many research areas. For any  two real-valued  vectors $\bm{x}=(x_{1}, x_{2},\ldots, x_{n})$ and $\bm{y}=(y_{1}, y_{2},\ldots, y_{n})$, let $x_{1:n}\leq x_{2:n}\leq \dots\leq x_{n:n}$ and $y_{1:n}\leq y_{2:n}\leq \dots\leq y_{n:n}$ be their increasing arrangements, respectively. 
	\begin{definition}
		A vector $\bm{x} $ is said to 
		\begin{enumerate}[{\rm (i)}]
			\item   majorized $\bm{y} $ {\rm (denoted by $\bm{x}\overset{\rm{m}}{\succeq}\bm{y}$)},
			if 
			$\sum_{i=1}^{j}x_{i:n}\leq\sum_{i=1}^{j}y_{i:n}$ for every $j=1,2,\ldots,n-1$, and $\sum_{i=1}^{n}x_{i:n}=\sum_{i=1}^{n}y_{i:n}${\rm;}	
			\item   weakly supermajorized $\bm{y} $ {\rm (denoted by $\bm{x}\stackrel{{\rm w}}{\succeq}\bm{y}$)}, if $\sum^{j}_{i=1}x_{i:n}\leq \sum^{j}_{i=1}y_{i:n}$ for every $j=1,\ldots, n${\rm;}	
			\item   weakly submajorized $\bm{y} $ {\rm (denoted by $\bm{x}\underset{{\rm w}}{\succeq}\bm{y}$)}, if $\sum^{n}_{i=j}x_{i:n}\geq \sum^{n}_{i=j}y_{i:n}$ for every $j=1,\ldots, n${\rm;}
		\end{enumerate}	
	\end{definition}
	
	It is known that  
	\begin{equation*}
		\bm{x}\underset{{\rm w}}{\succeq}\bm{y}\Longleftarrow\bm{x}\overset{\rm{m}}{\succeq}\bm{y}\Longrightarrow\bm{x}\overset{\rm{w}}{\succeq}\bm{y}\Longrightarrow\bm{x}\stackrel{{\rm rm}}{\succeq}\bm{y}
	\end{equation*}
	for any two non-negative vectors   $\bm{x} $   and   $ \bm{y}$, while the reverse is not true in general. The notion
	of majorization is quite useful in establishing various inequalities.  For more detailed discussion on the theory of majorization and its  applications, one may refer to \cite{Marshall2011} and \cite{Balakrishnan2013}. 
	
	The following lemma is very helpful to establish
	inequalities for asymmetrical multivariate functions in terms of weak majorization order.
	
	
	\begin{lemma}{\rm (\cite{Marshall2011})}\label{wconvex}
		Let $\phi$ be a real valued function defined on  $\mathcal D_n$ and continuously differential on the interior of $\mathcal D_n$. Denote the partial derivative of $\phi$ with respect to its $k$-th argument by $\phi_{(k)}(\bm z)=\partial \phi(\bm z) /\partial z_k$, $k=1,\dots, n$. Then,  $\phi (\bm x) \le \phi (\bm y)$  whenever
		$ \bm{x}\overset{{\rm w}}{\preceq}\bm{y}  \text{ on } \mathcal D_n$ if and only if $  0 \ge \phi_{(1)}(\bm z) \ge \phi_{(2)}(\bm z)  \ge \cdots \ge \phi_{(n)}(\bm z).$ Similarly, $\phi (\bm x) \le \phi (\bm y)$  whenever
		$ \bm{x}{\preceq}_{{\rm w}}\bm{y}  \text{ on } \mathcal D_n$ if and only if $ \phi_{(1)}(\bm z) \ge \phi_{(2)}(\bm z)  \ge \cdots \ge \phi_{(n)}(\bm z) \ge 0.$
	\end{lemma}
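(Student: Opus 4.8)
The plan is to derive both equivalences from one template, treating the weak supermajorization statement (order $\overset{\rm w}{\preceq}$, gradient condition $0\ge\phi_{(1)}(\bm z)\ge\cdots\ge\phi_{(n)}(\bm z)$) and the weak submajorization statement (order $\preceq_{\rm w}$, gradient condition $\phi_{(1)}(\bm z)\ge\cdots\ge\phi_{(n)}(\bm z)\ge0$) in parallel, since they share the chain $\phi_{(1)}\ge\cdots\ge\phi_{(n)}$ and differ only in whether the common sign of all partials is nonpositive or nonnegative. I would use three ingredients. \emph{(i) A Schur--Ostrowski criterion on the cone}: extending $\phi$ to the symmetric function $\psi(\bm z)=\phi(\bm z^{\downarrow})$ given by the decreasing rearrangement, which is continuously differentiable off the diagonal, the chain $\phi_{(1)}(\bm z)\ge\cdots\ge\phi_{(n)}(\bm z)$ on the interior of $\mathcal D_n$ is exactly the Schur--Ostrowski condition for $\psi$, hence equivalent to ``$\phi(\bm x)\le\phi(\bm y)$ whenever $\bm x,\bm y\in\mathcal D_n$ and $\bm y\overset{\rm m}{\succeq}\bm x$''. \emph{(ii) The sign of the partials}: since the chain makes $\phi_{(n)}$ the smallest and $\phi_{(1)}$ the largest partial derivative on $\mathcal D_n$, the extra requirement $\phi_{(n)}\ge0$ (resp.\ $\phi_{(1)}\le0$) is equivalent to all partials being $\ge0$ (resp.\ $\le0$), i.e.\ to $\phi$ being coordinatewise nondecreasing (resp.\ nonincreasing) on $\mathcal D_n$. \emph{(iii) Reduction of weak to ordinary majorization}: if $\bm x\preceq_{\rm w}\bm y$ in $\mathcal D_n$ then there is $\bm u\in\mathcal D_n$ with $\bm x\le\bm u$ coordinatewise and $\bm y\overset{\rm m}{\succeq}\bm u$, and dually if $\bm x\overset{\rm w}{\preceq}\bm y$ there is $\bm u\in\mathcal D_n$ with $\bm x\ge\bm u$ and $\bm y\overset{\rm m}{\succeq}\bm u$.

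For the ``if'' direction, assume the gradient condition, say $\phi_{(1)}\ge\cdots\ge\phi_{(n)}\ge0$, and let $\bm x\preceq_{\rm w}\bm y$ in $\mathcal D_n$. Take the sandwich vector $\bm u$ from (iii): by (ii), $\phi(\bm x)\le\phi(\bm u)$ because $\bm u$ dominates $\bm x$ coordinatewise, and by (i), $\phi(\bm u)\le\phi(\bm y)$ because $\bm y\overset{\rm m}{\succeq}\bm u$; hence $\phi(\bm x)\le\phi(\bm y)$. The supermajorization case is verbatim the same with the dual sandwich vector, using that $0\ge\phi_{(1)}$ and the chain force all partials $\le0$, so $\phi$ is coordinatewise nonincreasing and $\phi(\bm x)\le\phi(\bm u)$ still holds.

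For the ``only if'' direction, work at an interior point $\bm z$ (so $z_1>\cdots>z_n$) and perturb by vectors $\bm z^{\varepsilon}$ that stay in $\mathcal D_n$ for $\varepsilon>0$ small. With $\bm e_i$ the $i$-th unit vector, $\bm z^{\varepsilon}=\bm z-\varepsilon(\bm e_k-\bm e_{k+1})$ satisfies $\bm z\overset{\rm m}{\succeq}\bm z^{\varepsilon}$, hence $\bm z^{\varepsilon}\overset{\rm w}{\preceq}\bm z$ and $\bm z^{\varepsilon}\preceq_{\rm w}\bm z$, so the assumed implication gives $\phi(\bm z^{\varepsilon})\le\phi(\bm z)$; dividing by $\varepsilon$ and letting $\varepsilon\downarrow0$ yields $\phi_{(k)}(\bm z)\ge\phi_{(k+1)}(\bm z)$ for $k=1,\dots,n-1$. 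For the sign: in the submajorization statement take $\bm z^{\varepsilon}=\bm z-\varepsilon\bm e_k$, which is $\le\bm z$ coordinatewise and hence $\preceq_{\rm w}\bm z$, so $\phi(\bm z^{\varepsilon})\le\phi(\bm z)$ gives $\phi_{(k)}(\bm z)\ge0$ for every $k$; in the supermajorization statement take $\bm z^{\varepsilon}=\bm z+\varepsilon\bm e_k$, which is $\overset{\rm w}{\preceq}\bm z$, giving $\phi_{(k)}(\bm z)\le0$. Continuity of $\phi$ extends these inequalities from the interior to wherever the partials exist.

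The step I expect to be the main obstacle is making ingredients (i) and (iii) rigorous. For (i), one must connect two $\overset{\rm m}{\succeq}$-comparable vectors of $\mathcal D_n$ by a chain of Robin Hood transfers along which $\phi$ can legitimately be differentiated, which forces one either to keep every intermediate vector inside $\mathcal D_n$ or to work with the extension $\psi$ and invoke a density argument at the diagonal, where $\psi$ need not be differentiable. For (iii), one must verify that the sandwich vector $\bm u$ can be chosen inside $\mathcal D_n$; the naive repair of the deficit in a single coordinate need not be majorized by $\bm y$, so the deficit must be spread across several coordinates. Both are classical points of majorization theory, carried out in \cite{Marshall2011}; granting them, the remaining bookkeeping is routine.
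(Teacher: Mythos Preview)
The paper does not prove this lemma at all: it is quoted verbatim as a known result from \cite{Marshall2011} and used as a black box in the proofs of Theorems~\ref{tp}, \ref{tla}, and \ref{thrf}. There is therefore no ``paper's own proof'' to compare against.

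Your outline is correct and is exactly the classical argument one finds in \cite{Marshall2011}: Schur--Ostrowski for the chain of partials, coordinatewise monotonicity for the sign condition, and the sandwich lemma (existence of $\bm u$ with $\bm x\le\bm u$ and $\bm y\overset{\rm m}{\succeq}\bm u$, or the dual) to reduce weak to ordinary majorization. The necessity direction via small perturbations $\bm z\pm\varepsilon\bm e_k$ and $\bm z-\varepsilon(\bm e_k-\bm e_{k+1})$ is also the standard route. Your caveats about (i) keeping the Robin Hood chain inside $\mathcal D_n$ (or passing to the symmetric extension and handling the diagonal) and (iii) constructing the sandwich vector within $\mathcal D_n$ are the genuine technical points, and you are right that both are handled in \cite{Marshall2011}; once granted, the rest is bookkeeping as you say.
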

	\begin{lemma}\label{kundu}
		\cite{2016Some}Let $h:\mathcal{I_+}\rightarrow\mathbb{R}$ be a function, continuously differentiable on the interior of $\mathcal{I_+}.$ Then, for ${\bm x}$,${\bm y} \in \mathcal{I_+}$, ${\bm x}\overset{\rm m}{\succeq}{\bm y}$ implies $h({\bm x})\geq(respectively, \leq) h({\bm y})$ if and only if, $h_{(k)}({\bm z})$ is increasing (respectively, decreasing) in $k=1,\dots,n,$ where $h_{(k)}({\bm z})=\partial h_({\bm z})/\partial z_k$ denotes the partial derivative of $h$ with respect to its $k$th argumemt.
	\end{lemma}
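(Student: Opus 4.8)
The statement is a Schur--Ostrowski type criterion restricted to the cone $\mathcal I_+$ of increasingly ordered nonnegative vectors, and I would prove the ``if'' and ``only if'' implications separately, working first in the interior and then extending to the boundary.

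\emph{Necessity.} Assume that $\bm x\overset{\rm m}{\succeq}\bm y$ forces $h(\bm x)\ge h(\bm y)$ on $\mathcal I_+$. Fix $\bm z$ in the interior of $\mathcal I_+$, so $0<z_1<z_2<\cdots<z_n$, fix indices $k<l$, and for small $\varepsilon>0$ put $\bm z(\varepsilon)=\bm z+\varepsilon(\bm{e}_l-\bm{e}_k)$, where $\bm{e}_j$ is the $j$th coordinate vector. First I would check that, because $\bm z$ is interior, $\bm z(\varepsilon)\in\mathcal I_+$ for all sufficiently small $\varepsilon$, that it has the same coordinate sum as $\bm z$, and that $\sum_{i=1}^{j}z_i(\varepsilon)=\sum_{i=1}^{j}z_i$ for $j<k$ and $j\ge l$ while $\sum_{i=1}^{j}z_i(\varepsilon)=\sum_{i=1}^{j}z_i-\varepsilon\le\sum_{i=1}^{j}z_i$ for $k\le j<l$; hence $\bm z(\varepsilon)\overset{\rm m}{\succeq}\bm z$. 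The hypothesis then gives $h(\bm z(\varepsilon))\ge h(\bm z)$, and dividing by $\varepsilon$ and letting $\varepsilon\downarrow 0$ yields $h_{(l)}(\bm z)-h_{(k)}(\bm z)\ge 0$. Since $k<l$ were arbitrary, $h_{(k)}(\bm z)$ is increasing in $k$ on the interior of $\mathcal I_+$.

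\emph{Sufficiency.} Conversely, suppose $h_{(1)}(\bm z)\le h_{(2)}(\bm z)\le\cdots\le h_{(n)}(\bm z)$ throughout the interior, and take $\bm x\overset{\rm m}{\succeq}\bm y$ with $\bm x,\bm y\in\mathcal I_+$. After reducing, by a small inward perturbation preserving the majorization and by continuity, to the case where $\bm x$ and $\bm y$ are interior, note that the segment $\bm z(t)=(1-t)\bm y+t\bm x$, $t\in[0,1]$, stays in the interior of $\mathcal I_+$ (a convex combination of ordered positive vectors is again ordered and positive), so $t\mapsto h(\bm z(t))$ is continuously differentiable with
\[
\frac{\dif}{\dif t}\,h(\bm z(t))=\sum_{k=1}^{n}h_{(k)}(\bm z(t))\,(x_k-y_k).
\]
Writing $S_j=\sum_{i=1}^{j}(x_i-y_i)$ and using Abel summation together with $S_n=0$, the right-hand side equals $\sum_{k=1}^{n-1}\bigl(h_{(k)}(\bm z(t))-h_{(k+1)}(\bm z(t))\bigr)S_k$. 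Because $\bm x$ and $\bm y$ are already increasingly ordered, $\bm x\overset{\rm m}{\succeq}\bm y$ is precisely $S_k\le 0$ for $k=1,\dots,n-1$, while the hypothesis gives $h_{(k)}(\bm z(t))-h_{(k+1)}(\bm z(t))\le 0$; hence every term is nonnegative, so $\frac{\dif}{\dif t}h(\bm z(t))\ge 0$, and integrating over $[0,1]$ gives $h(\bm x)\ge h(\bm y)$. The parenthetical ``decreasing''/``$\le$'' version follows by applying this to $-h$.

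\emph{Main difficulty.} The displayed computations are routine; the two points that genuinely require care are, in the necessity part, verifying that the perturbation $\bm z+\varepsilon(\bm{e}_l-\bm{e}_k)$ both remains inside $\mathcal I_+$ \emph{and} is majorization-comparable to $\bm z$ in the correct direction (which is exactly why one argues at an interior point), and, in the sufficiency part, the passage from the interior to the boundary of $\mathcal I_+$, where $h$ is not assumed differentiable; I would handle the latter by approximating boundary vectors from inside $\mathcal I_+$ along a majorization-preserving family and invoking continuity of $h$.
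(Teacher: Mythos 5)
Your proof is correct in substance, and there is nothing in the paper to compare it against: the paper does not prove this lemma at all, it is quoted from \cite{2016Some} and used as a black box. Your argument is the standard Schur--Ostrowski-type criterion adapted to the ordered cone $\mathcal{I}_+$. The necessity step is sound: for interior $\bm z$ (strictly increasing, positive) the transfer $\bm z+\varepsilon(\bm e_l-\bm e_k)$ with $k<l$ stays in $\mathcal{I}_+$ for small $\varepsilon$, its increasing arrangement is itself, its partial sums drop by $\varepsilon$ exactly for $k\le j<l$, so it majorizes $\bm z$, and dividing by $\varepsilon$ and letting $\varepsilon\downarrow 0$ gives $h_{(l)}(\bm z)\ge h_{(k)}(\bm z)$. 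The sufficiency step via Abel summation along the segment is also correct: convex combinations of interior points of $\mathcal{I}_+$ are interior, and for vectors that are already increasingly arranged, $\bm x\overset{\rm m}{\succeq}\bm y$ is precisely $S_k\le 0$ for $k<n$ with $S_n=0$, so each term $\bigl(h_{(k)}-h_{(k+1)}\bigr)S_k$ is a product of two nonpositive quantities. The one point to tighten is the boundary issue you flag yourself: as stated, the lemma only assumes continuous differentiability on the interior of $\mathcal{I}_+$, so continuity of $h$ at boundary points---which your limiting argument requires---is an implicit additional hypothesis (it is assumed in the cited source). Granting it, a concrete majorization-preserving interior approximation is $\bm x+\varepsilon(1,2,\dots,n)$ and $\bm y+\varepsilon(1,2,\dots,n)$: adding the same strictly increasing vector to both leaves every partial-sum comparison unchanged while making both vectors strictly increasing and strictly positive, after which your interior argument applies and you pass to the limit $\varepsilon\downarrow 0$.
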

	\begin{lemma}\label{proof}
		Let the function $T(x)=\frac{x^2}{(1-p+px)^2}$, where $p\in(0,1]$. Then $T(x)$ is increasing in $x\in [0,1]$.
	\end{lemma}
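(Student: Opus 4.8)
The plan is to reduce the statement to a single sign computation. First I would record that for $p\in(0,1]$ and $x\in[0,1]$ the quantity $d(x):=1-p+px=1-p(1-x)$ is nonnegative, and in fact strictly positive on $[0,1]$ except in the degenerate corner $p=1$, $x=0$. In the case $p=1$ one simply has $T(x)=x^2/x^2=1$ for $x>0$, so the claim is trivial; hence I may assume $p\in(0,1)$, in which case $d(x)>0$ throughout $[0,1]$ and $T$ is continuously differentiable there.

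Next I would differentiate directly. By the quotient rule,
\[
T'(x)=\frac{2x\,d(x)^2-x^2\cdot 2d(x)\,p}{d(x)^4}=\frac{2x\big(d(x)-px\big)}{d(x)^3}=\frac{2x(1-p)}{d(x)^3}.
\]
Since $x\ge 0$, $1-p\ge 0$, and $d(x)>0$ on $[0,1]$, every factor on the right is nonnegative, so $T'(x)\ge 0$ on $[0,1]$ and therefore $T$ is increasing there.

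An alternative route, which sidesteps the corner case altogether, is to write $T(x)=g(x)^2$ with $g(x)=x/(1-p+px)$. One checks $g(x)\ge 0$ on $[0,1]$ and $g'(x)=(1-p)/(1-p+px)^2\ge 0$, so $g$ is nonnegative and increasing; composing with the increasing map $t\mapsto t^2$ on $[0,\infty)$ shows $T$ is increasing. Either way, there is no genuine obstacle: the only point requiring care is the positivity of the denominator over the whole interval (hence the short remark on $p=1$, $x=0$), and, in the second approach, the observation that the base $g$ being squared is nonnegative.
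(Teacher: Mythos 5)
Your proposal is correct and follows essentially the same route as the paper: differentiate, observe that $T'(x)$ has the sign of $2x(1-p+px)(1-p)\ge 0$, and conclude monotonicity. Your extra care about the degenerate corner $p=1$, $x=0$ (and the alternative $T=g^2$ argument) is a minor refinement the paper omits, but the core argument is identical.
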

	\begin{proof}
		Taking the derivative of $T(x)$ with respect to $X\in (0,1]$, we immediately get
		\begin{eqnarray*}
			T^\prime(x)&\overset{\rm sgn}{=}&2x(1-p+px)[1-p+px-px]\\
			&=&2x(1-p+px)[1-p]\geq0.
		\end{eqnarray*}
		Then,we get the required result.
	\end{proof}
	
	Next, let us review the concept of copula. For a random vector $\bm{X}=(X_1,X_2,\ldots,X_n)$ with the joint distribution function $H$ and respective marginal distribution functions $F_1,F_2, \ldots,F_n$,  the \emph{copula} of $X_1,X_2,\ldots,X_n$ is a distribution function $C:[0,1]^n\mapsto [0,1]$, satisfying $$H(\bm{x})=\mathbb{P}(X_1\le x_1,X_2\le x_2,\ldots,X_n\le x_n)=C(F_1(x_1),F_2(x_2),\ldots,F_{n}(x_n)).$$  Similarly, a \emph{survival copula} of $X_1,X_2,\ldots,X_n$  is a survival function $\hat{C} :[0,1]^n\mapsto [0,1]$, satisfying
	\begin{equation*}
		\Hbar(\bm{x})=\mathbb{P}(X_1>x_1,X_2>x_2,\ldots,X_n>x_n)=\hat{C}(\Fbar_1(x_1), \Fbar_2(x_2),\ldots,\Fbar_n(x_n)),
	\end{equation*}
	where $ \Hbar(\bm{x})$ is the joint survival function.

	Notably, since  Archimedean copula  has well the mathematical properties,
	which is applied to the various areas. In this paper, we will  sometimes employ Archimedean copulas to model the dependence structure among components lifetimes.
	\begin{definition}{\rm\citep[Expression 4.6.1 on Page 151 of][]{Nelsen2006}}
		For a decreasing and continuous function $\phi:[0,1]\mapsto [0,+\infty]$ such that $\phi(0)=+\infty $ and $\phi(1)=0$,  and  let $\psi=\phi^{-1}$ be the pseudo-inverse of $\phi$. Then
		$$C_\phi (u_1, u_2,...,u_n)=\psi \Big(\sum_{i=1}^n\phi (u_i)\Big),  \text{ for all } u_i\in [0,1], \quad i=1,2,\dots,n $$
		is said to be an Archimedean copula with generator $\phi$ if $(-1)^k\phi^{(k)}(x)\ge 0$ for $k=0,1, \dots,n-2$ and $(-1)^{n-2}\phi^{(n-2)}(x)$ is decreasing and convex.
	\end{definition}
	

	\section{Usual stochastic order of  second-order statistics from dependent and heterogeneous observations}\label{st}
	In this section, we carry out the usual stochastic order of second order statistics from dependent and heterogeneous MPHR samples. Let $\bm{X}=(X_1,X_2,\dots,X_n)$ and $\bm{Y}=(Y_1,Y_2,\dots,Y_n)$ be two n-dimensional vectors of heterogeneous dependent random observations, with $\bm{X}\thicksim MPHR(\bm{\alpha},\bm{\lambda};\overline{F},\psi)$ and $\bm{Y}\thicksim MPHR(\bm{\beta},\bm{\mu};\overline{F},\psi)$, where $\bm{\alpha}=(\alpha_1,\alpha_2,\dots,\alpha_n),\ \bm{\lambda}=(\lambda_1,\lambda_2,\dots,\lambda_n),\ \bm{\beta}=(\beta_1,\beta_2,\dots,\beta_n),$  and $\bm{\mu}=(\mu_1,\mu_2,\dots,\mu_n)$. Here, $X_i\thicksim MPHR(\alpha_i,\lambda_i;\overline{F},\psi)$ and $Y_i\thicksim MPHR(\beta_i,\mu_i;\overline{F},\psi)$, for $i=1,2,\dots,n$. We recall that $\overline{F}$ is the baseline survival function, $\psi$ is generator of the associated Archimedean survival copula. 
	
	Next, we establish sufficient conditions for the usual stochastic order, whenever the modified proportional hazard rate parameters may be different with the tilt parameters being equal.
	\begin{theorem}\label{tp}
		Let $X_1,X_2\dots,X_n$ be n statistically dependent heterogeneous random variables with $X_i\thicksim MPHR(\alpha,\lambda_i;\overline{F};\psi)$, where $0<\alpha\leq1,$ and let $N_1$ be a non-negative integer-valued random variable independent of $ X_{i}'$s. Let $Y_1,Y_2\dots,Y_n$ be another n statistically dependent heterogeneous random variables with $Y_i\thicksim MPHR(\alpha,\mu_i;\overline{F};\psi)$, and let $N_2$ be another non-negative integer-valued random variable independent of $ Y_{i}'$s. Suppose $\lambda,\mu\in\mathcal D_+$ $(         or
		\ \mathcal{I_+})$, if $N_1\geq_{\rm st}N_2,$ and $\psi$ is log-concave. Then we have
		
		\begin{equation*}
			\lambda\overset{\rm w}{\succeq}\mu\Rightarrow X_{2:N_1}\geq_{\rm st}Y_{2:N_2}
		\end{equation*}
		
	\end{theorem}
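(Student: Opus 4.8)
The plan is to condition on the sample sizes to reduce to a fixed common sample size, settle that case with the weak-majorization criterion of Lemma~\ref{wconvex}, and then average back using $N_1\ge_{\rm st}N_2$.

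\emph{Reduction to a fixed sample size.} Since $N_1$ is independent of the $X_i$'s, $\overline F_{X_{2:N_1}}(x)=\sum_k\mathbb P(N_1=k)\,\overline F_{X_{2:k}}(x)$, and likewise for $Y_{2:N_2}$. With $t=\overline F(x)\in[0,1]$, an $MPHR(\alpha,\lambda_i;\overline F)$ variable has survival function $\overline G(x;\alpha,\lambda_i)=\alpha t^{\lambda_i}/(1-\overline\alpha t^{\lambda_i})$, which is increasing in $t^{\lambda_i}$ and hence decreasing in $\lambda_i$. For components joined by the Archimedean survival copula with generator $\phi$ and $\psi=\phi^{-1}$, inclusion--exclusion (``at most one $X_i\le x$'') gives, for a fixed sample size $k$,
\begin{equation*}
\overline F_{X_{2:k}}(x)=\sum_{j=1}^{k}\psi\!\Big(\sum_{i\neq j}\phi\big(\overline G(x;\alpha,\lambda_i)\big)\Big)-(k-1)\,\psi\!\Big(\sum_{i=1}^{k}\phi\big(\overline G(x;\alpha,\lambda_i)\big)\Big),
\end{equation*}
and analogously for $\overline F_{Y_{2:k}}(x)$ with $(\alpha,\mu_i)$.

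\emph{The fixed-size comparison (the crux).} Fix $x,k$ and regard the display above as a function $\Psi$ of the parameter vector; it is symmetric and, being a coherent-system reliability, decreasing in each argument. By Lemma~\ref{wconvex}, $\Psi(\bm\mu)\le\Psi(\bm\lambda)$ whenever $\bm\mu\overset{\rm w}{\preceq}\bm\lambda$ on $\mathcal D_k$ will follow once
\[
0\ \ge\ \Psi_{(1)}(\bm z)\ \ge\ \Psi_{(2)}(\bm z)\ \ge\ \cdots\ \ge\ \Psi_{(k)}(\bm z)\quad\text{on }\mathcal D_k
\]
is verified. Differentiation turns $\Psi_{(m)}$ into a signed combination of terms $\psi'(S)\,\phi'\big(\overline G(x;\alpha,z_m)\big)\,\partial_{z_m}\overline G(x;\alpha,z_m)$, where $S$ is either the full sum $\sum_i\phi(\overline G_i)$ or a deleted sum $\sum_{i\neq\ell}\phi(\overline G_i)$; since $\phi\ge0$ these arguments are ordered, and log-concavity of $\psi$ (i.e.\ $\psi'/\psi$ decreasing) is exactly what controls the $\psi'$-factors attached to the full sum against those attached to the deleted sums so that the assembled expression has the stated signs and ordering. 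The $MPHR$-specific factor $\phi'(\overline G(x;\alpha,z_m))\,\partial_{z_m}\overline G(x;\alpha,z_m)$ is handled by Lemma~\ref{proof}: with $p=\alpha$ and variable $t^{z_m}$ it shows the pertinent quantity is monotone in $t^{z_m}$, hence monotone in $z_m$ along $\mathcal D_k$, which supplies the ordering $\Psi_{(1)}\ge\cdots\ge\Psi_{(k)}$. The hypothesis $\bm\lambda,\bm\mu\in\mathcal D_+$ (or $\mathcal I_+$) both legitimizes working on $\mathcal D_k$ and ensures that $\bm\lambda\overset{\rm w}{\succeq}\bm\mu$ descends to the $k$-subvectors actually used, so that $X_{2:k}\ge_{\rm st}Y_{2:k}$ holds for every relevant $k$.

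\emph{Averaging back.} Combining the above, $\overline F_{X_{2:N_1}}(x)=\sum_k\mathbb P(N_1=k)\,\overline F_{X_{2:k}}(x)\ge\sum_k\mathbb P(N_1=k)\,\overline F_{Y_{2:k}}(x)$. Since $k\mapsto\overline F_{Y_{2:k}}(x)$ is monotone --- adjoining a component changes the second-failure time of a $(k-1)$-out-of-$k$ fail-safe system monotonically on every sample path --- the hypothesis $N_1\ge_{\rm st}N_2$ together with the definition of $\le_{\rm st}$ yields $\sum_k\mathbb P(N_1=k)\,\overline F_{Y_{2:k}}(x)\ge\sum_k\mathbb P(N_2=k)\,\overline F_{Y_{2:k}}(x)=\overline F_{Y_{2:N_2}}(x)$, so $X_{2:N_1}\ge_{\rm st}Y_{2:N_2}$, as required.

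\emph{Main obstacle.} The entire weight of the argument rests on the fixed-size step, i.e.\ establishing $0\ge\Psi_{(1)}\ge\cdots\ge\Psi_{(k)}$ on $\mathcal D_k$: in each partial derivative the $k-1$ deleted-sum contributions must be balanced against the single full-sum contribution, and this is precisely where the log-concavity of $\psi$ and the monotonicity furnished by Lemma~\ref{proof} are indispensable, and where the sign bookkeeping is most delicate.
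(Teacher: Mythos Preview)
Your skeleton is the paper's: condition on the sample size, prove the fixed-$m$ inequality via Lemma~\ref{wconvex}, then mix back over $N_1,N_2$. Where the sketch goes wrong is the fixed-size step, and specifically the appeal to Lemma~\ref{proof}.

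That lemma (monotonicity of $x^2/(1-p+px)^2$) is not used in this proof at all; the paper reserves it for Theorem~\ref{tpla}, where the heterogeneity sits in the tilt parameters $\alpha_i$ and the relevant prefactor really has that shape. Here the partial derivative is
\[
\frac{\partial\overline F_{X_{2:m}}}{\partial\lambda_k}=\phi'\!\Bigl(\tfrac{\alpha\overline F^{\lambda_k}}{1-\overline\alpha\overline F^{\lambda_k}}\Bigr)\,\frac{\alpha(\log\overline F)\,\overline F^{\lambda_k}}{(1-\overline\alpha\overline F^{\lambda_k})^2}\Bigl[\sum_{i\neq k}\psi'(S_i)-(m-1)\psi'(S)\Bigr],
\]
and substituting $p=\alpha$, $x=t^{\lambda_k}$ into Lemma~\ref{proof} produces $t^{2\lambda_k}/(1-\alpha+\alpha t^{\lambda_k})^2$, which is not the prefactor above (wrong denominator, wrong power of $t^{\lambda_k}$). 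What the paper actually does is absorb $\phi'$ via $\phi'(u)=1/\psi'(\phi(u))$, rewrite the $k$-specific part as
\[
\frac{\psi(\phi(u_k))}{\psi'(\phi(u_k))}\cdot\frac{1}{1-\overline\alpha\,\overline F^{\lambda_k}}
\]
up to a common nonzero factor, set the difference of partials equal to $P_1Q_1-U_1V_1=P_1(Q_1-V_1)+(P_1-U_1)V_1$, and treat the two pieces separately: $(P_1-U_1)V_1\ge0$ comes from log-concavity of $\psi$ (which makes $\psi/\psi'$ monotone) together with the elementary monotonicity of $(1-\overline\alpha\,\overline F^{\lambda})^{-1}$ in $\lambda$; $P_1(Q_1-V_1)\ge0$ comes from convexity of $\psi$ alone, because the full-sum $\psi'(S)$ terms cancel in $Q_1-V_1$ and what remains is a single comparison $\psi'(S_l)-\psi'(S_k)$ of deleted sums. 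Your description that log-concavity ``controls the $\psi'$-factors attached to the full sum against those attached to the deleted sums'' is the mechanism for the \emph{sign} of each $\Psi_{(k)}$, not for the \emph{ordering} $\Psi_{(k)}\ge\Psi_{(l)}$; for the latter the full-sum contributions cancel, and log-concavity enters only through $\psi/\psi'$.

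So the gap is concrete: you have the right target ($0\ge\Psi_{(1)}\ge\cdots\ge\Psi_{(k)}$), but the lemma you invoke does not fit the algebra here, and the $P_1Q_1-U_1V_1$ decomposition that actually delivers the ordering is absent from the sketch.
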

	
	\begin{proof}
		We present the proof when  $\lambda,\mu\in\mathcal D_+$. The proof is similar when these vectors belong to $\mathcal I_+$. The survival function of $X_{2:n}$ can be express as 
		\begin{eqnarray*}
			\overline{F}_{X_{2:n}}(x)=\sum\limits_{i=1}^{n}\psi\left( \sum\limits_{j\neq i}^{n}\phi\left( \frac{\alpha\overline{F}^{\lambda_j}(x)}{1-\overline{\alpha}\overline{F}^{\lambda_j}(x)}\right) \right) -(n-1)\psi\left(\sum\limits_{i=1}^{n}\phi\left(  \frac{\alpha\overline{F}^{\lambda_i}(x)}{1-\overline{\alpha}\overline{F}^{\lambda_i}(x)}\right)  \right) 
		\end{eqnarray*}
		If $N_1\geq_{\rm st}N_2$, then
		\begin{eqnarray*}
			\overline{F}_{X_{2:N_1}}(x)&=&\sum\limits_{m=1}^{n}P(X_{2:N_1}> x|N_1=m)\mathbb{P}(N_1=m)\\
			&=&\sum\limits_{m=1}^{n}P(X_{2:m}> x)\mathbb{P}(N_1=m)\\
			&\geq&\sum\limits_{m=1}^{n}P(X_{2:m}> x)\mathbb{P}(N_2=m).
		\end{eqnarray*}
		To obtain the desired result, it is then sufficient to show that $\overline{F}_{X_{2:m}}(x)\geq\overline{F}_{Y_{2:m}}(x),\ m=1,\dots,n. $ For any $k= {1,2,\dots,m}$, taking the partial derivative of $\overline{F}_{X_{2:m}}(x)$ with respect to $\lambda_k,$ we have 
		\begin{eqnarray*}
			\frac{\partial\overline{F}_{X_{2:m}}(x)}{\partial \lambda_k}
			&=& \phi^\prime\left( \frac{\alpha\overline{F}^{\lambda_k}(x)}{1-\overline{\alpha}\overline{F}^{\lambda_k}(x)}\right)\frac{\alpha {\rm log}{\overline{F}(x)}\overline{F}^{\lambda_k}(x)}{[1-\overline{\alpha}\overline{F}^{\lambda_k}(x)]^2}\\
			&&\times\left[ \sum\limits_{i\neq k}^{m}\psi^\prime\left( \sum\limits_{j\neq i}^{m}\phi\left( \frac{\alpha\overline{F}^{\lambda_j}(x)}{1-\overline{\alpha}\overline{F}^{\lambda_j}(x)}\right) \right) -(m-1)\psi^\prime\left(\sum\limits_{i=1}^{m}\phi\left(  \frac{\alpha\overline{F}^{\lambda_i}(x)}{1-\overline{\alpha}\overline{F}^{\lambda_i}(x)}\right)  \right)\right] \\
			&\leq&0
		\end{eqnarray*}
		where the inequality is due to the fact that the decreasing and convex properties of $\phi$.
		Hence, we have 
		\begin{eqnarray*}
			&&\frac{\partial\overline{F}_{X_{2:m}}(x)}{\partial \lambda_k}-\frac{\partial\overline{F}_{X_{2:m}}(x)}{\partial \lambda_l}\\
			&=&\phi^\prime\left( \frac{\alpha\overline{F}^{\lambda_k}(x)}{1-\overline{\alpha}\overline{F}^{\lambda_k}(x)}\right)\frac{\alpha {\rm log}{\overline{F}(x)}\overline{F}^{\lambda_k}(x)}{[1-\overline{\alpha}\overline{F}^{\lambda_k}(x)]^2}\\
			&&\times\left[ \sum\limits_{i\neq k}^{m}\psi^\prime\left( \sum\limits_{j\neq i}^{m}\phi\left( \frac{\alpha\overline{F}^{\lambda_j}(x)}{1-\overline{\alpha}\overline{F}^{\lambda_j}(x)}\right) \right) -(m-1)\psi^\prime\left(\sum\limits_{i=1}^{m}\phi\left(  \frac{\alpha\overline{F}^{\lambda_i}(x)}{1-\overline{\alpha}\overline{F}^{\lambda_i}(x)}\right)  \right)\right] \\
			&&-\phi^\prime\left( \frac{\alpha\overline{F}^{\lambda_l}(x)}{1-\overline{\alpha}\overline{F}^{\lambda_l}(x)}\right)\frac{\alpha {\rm log}{\overline{F}(x)}\overline{F}^{\lambda_l}(x)}{[1-\overline{\alpha}\overline{F}^{\lambda_l}(x)]^2}\\
			&&\times\left[ \sum\limits_{i\neq l}^{m}\psi^\prime\left( \sum\limits_{j\neq i}^{m}\phi\left( \frac{\alpha\overline{F}^{\lambda_j}(x)}{1-\overline{\alpha}\overline{F}^{\lambda_j}(x)}\right) \right) -(m-1)\psi^\prime\left(\sum\limits_{i=1}^{m}\phi\left(  \frac{\alpha\overline{F}^{\lambda_i}(x)}{1-\overline{\alpha}\overline{F}^{\lambda_i}(x)}\right)  \right)\right] \\
			&\overset{\rm sgn}{=}&\frac{\psi\left( \phi\left( \frac{\alpha\overline{F}^{\lambda_l}(x)}{1-\overline{\alpha}\overline{F}^{\lambda_l}(x)}\right)\right) }{\psi^\prime\left( \phi\left( \frac{\alpha\overline{F}^{\lambda_l}(x)}{1-\overline{\alpha}\overline{F}^{\lambda_l}(x)}\right)\right)}\frac{1}{1-\overline{\alpha}\overline{F}^{\lambda_l}(x)}\\
			&&\times\left[ \sum\limits_{i\neq l}^{m}\psi^\prime\left( \sum\limits_{j\neq i}^{m}\phi\left( \frac{\alpha\overline{F}^{\lambda_j}(x)}{1-\overline{\alpha}\overline{F}^{\lambda_j}(x)}\right) \right) -(m-1)\psi^\prime\left(\sum\limits_{i=1}^{m}\phi\left(  \frac{\alpha\overline{F}^{\lambda_i}(x)}{1-\overline{\alpha}\overline{F}^{\lambda_i}(x)}\right)  \right)\right]\\
			&&-\frac{\psi\left( \phi\left( \frac{\alpha\overline{F}^{\lambda_k}(x)}{1-\overline{\alpha}\overline{F}^{\lambda_k}(x)}\right)\right) }{\psi^\prime\left( \phi\left( \frac{\alpha\overline{F}^{\lambda_k}(x)}{1-\overline{\alpha}\overline{F}^{\lambda_k}(x)}\right)\right)}\frac{1}{1-\overline{\alpha}\overline{F}^{\lambda_k}(x)}\\
			&&\times\left[ \sum\limits_{i\neq k}^{m}\psi^\prime\left( \sum\limits_{j\neq i}^{m}\phi\left( \frac{\alpha\overline{F}^{\lambda_j}(x)}{1-\overline{\alpha}\overline{F}^{\lambda_j}(x)}\right) \right) -(m-1)\psi^\prime\left(\sum\limits_{i=1}^{m}\phi\left(  \frac{\alpha\overline{F}^{\lambda_i}(x)}{1-\overline{\alpha}\overline{F}^{\lambda_i}(x)}\right)  \right)\right]\\
			&=:&P_1Q_1-U_1V_1=P_1(Q_1-V_1)+(P_1-U_1)V_1.
		\end{eqnarray*}		
		For any $1\leq k<l\leq m$, $\lambda_k\geq \lambda_l$, by the log-concave and decreasing properties of $\psi$, we have
		\begin{eqnarray*}
			\frac{\psi\left( \phi\left( \frac{\alpha\overline{F}^{\lambda_k}(x)}{1-\overline{\alpha}\overline{F}^{\lambda_k}(x)}\right)\right) }{\psi^\prime\left( \phi\left( \frac{\alpha\overline{F}^{\lambda_k}(x)}{1-\overline{\alpha}\overline{F}^{\lambda_k}(x)}\right)\right)}\geq\frac{\psi\left( \phi\left( \frac{\alpha\overline{F}^{\lambda_l}(x)}{1-\overline{\alpha}\overline{F}^{\lambda_l}(x)}\right)\right) }{\psi^\prime\left( \phi\left( \frac{\alpha\overline{F}^{\lambda_l}(x)}{1-\overline{\alpha}\overline{F}^{\lambda_l}(x)}\right)\right)}.	
		\end{eqnarray*}
		It holds that
		\begin{eqnarray*}
			(P_1-U_1)V_1&=&\left[ \frac{\psi\left( \phi\left( \frac{\alpha\overline{F}^{\lambda_l}(x)}{1-\overline{\alpha}\overline{F}^{\lambda_l}(x)}\right)\right) }{\psi^\prime\left( \phi\left( \frac{\alpha\overline{F}^{\lambda_l}(x)}{1-\overline{\alpha}\overline{F}^{\lambda_l}(x)}\right)\right)}\frac{1}{1-\overline{\alpha}\overline{F}^{\lambda_l}(x)}-\frac{\psi\left( \phi\left( \frac{\alpha\overline{F}^{\lambda_k}(x)}{1-\overline{\alpha}\overline{F}^{\lambda_k}(x)}\right)\right) }{\psi^\prime\left( \phi\left( \frac{\alpha\overline{F}^{\lambda_k}(x)}{1-\overline{\alpha}\overline{F}^{\lambda_k}(x)}\right)\right)}\frac{1}{1-\overline{\alpha}\overline{F}^{\lambda_k}(x)}\right] \\
			&&\times\left[ \sum\limits_{i\neq k}^{m}\psi^\prime\left( \sum\limits_{j\neq i}^{m}\phi\left( \frac{\alpha\overline{F}^{\lambda_j}(x)}{1-\overline{\alpha}\overline{F}^{\lambda_j}(x)}\right) \right) -(m-1)\psi^\prime\left(\sum\limits_{i=1}^{m}\phi\left(  \frac{\alpha\overline{F}^{\lambda_i}(x)}{1-\overline{\alpha}\overline{F}^{\lambda_i}(x)}\right)  \right)\right]\\
			&\overset{\rm sgn}{=}&\frac{\psi\left( \phi\left( \frac{\alpha\overline{F}^{\lambda_k}(x)}{1-\overline{\alpha}\overline{F}^{\lambda_k}(x)}\right)\right) }{\psi^\prime\left( \phi\left( \frac{\alpha\overline{F}^{\lambda_k}(x)}{1-\overline{\alpha}\overline{F}^{\lambda_k}(x)}\right)\right)}\left(\frac{1}{1-\overline{\alpha}\overline{F}^{\lambda_k}(x)}-\frac{1}{1-\overline{\alpha}\overline{F}^{\lambda_l}(x)} \right)\\
			&&+\frac{1}{1-\overline{\alpha}\overline{F}^{\lambda_l}(x)} \left(\frac{\psi\left( \phi\left( \frac{\alpha\overline{F}^{\lambda_k}(x)}{1-\overline{\alpha}\overline{F}^{\lambda_k}(x)}\right)\right) }{\psi^\prime\left( \phi\left( \frac{\alpha\overline{F}^{\lambda_k}(x)}{1-\overline{\alpha}\overline{F}^{\lambda_k}(x)}\right)\right)}-\frac{\psi\left( \phi\left( \frac{\alpha\overline{F}^{\lambda_l}(x)}{1-\overline{\alpha}\overline{F}^{\lambda_l}(x)}\right)\right) }{\psi^\prime\left( \phi\left( \frac{\alpha\overline{F}^{\lambda_l}(x)}{1-\overline{\alpha}\overline{F}^{\lambda_l}(x)}\right)\right)} \right)\\
			&\geq&0 
		\end{eqnarray*}
		For any $\lambda_k\geq\lambda_l$, owing to $\phi$ is decreasing and convex,we have 
		\begin{eqnarray*}
			\psi^\prime\left( \sum\limits_{j\neq k}^{m}\phi\left( \frac{\alpha\overline{F}^{\lambda_j}(x)}{1-\overline{\alpha}\overline{F}^{\lambda_j}(x)}\right) \right)\leq\psi^\prime\left( \sum\limits_{j\neq l}^{m}\phi\left( \frac{\alpha\overline{F}^{\lambda_j}(x)}{1-\overline{\alpha}\overline{F}^{\lambda_j}(x)}\right) \right)
		\end{eqnarray*}
		As a result, we have
		\begin{eqnarray*}
			P_1(Q_1-V_1)
			&=&	\frac{\psi\left( \phi\left( \frac{\alpha\overline{F}^{\lambda_l}(x)}{1-\overline{\alpha}\overline{F}^{\lambda_l}(x)}\right)\right) }{\psi^\prime\left( \phi\left( \frac{\alpha\overline{F}^{\lambda_l}(x)}{1-\overline{\alpha}\overline{F}^{\lambda_l}(x)}\right)\right)}\frac{1}{1-\overline{\alpha}\overline{F}^{\lambda_l}(x)}\\
			&&\times\left[ \sum\limits_{i\neq l}^{m}\psi^\prime\left( \sum\limits_{j\neq i}^{m}\phi\left( \frac{\alpha\overline{F}^{\lambda_j}(x)}{1-\overline{\alpha}\overline{F}^{\lambda_j}(x)}\right) \right)- \sum\limits_{i\neq k}^{m}\psi^\prime\left( \sum\limits_{j\neq i}^{m}\phi\left( \frac{\alpha\overline{F}^{\lambda_j}(x)}{1-\overline{\alpha}\overline{F}^{\lambda_j}(x)}\right) \right) \right]\\ 
			&\overset{\rm sgn}{=}&\sum\limits_{i\neq k}^{m}\psi^\prime\left( \sum\limits_{j\neq i}^{m}\phi\left( \frac{\alpha\overline{F}^{\lambda_j}(x)}{1-\overline{\alpha}\overline{F}^{\lambda_j}(x)}\right) \right)-\sum\limits_{i\neq l}^{m}\psi^\prime\left( \sum\limits_{j\neq i}^{m}\phi\left( \frac{\alpha\overline{F}^{\lambda_j}(x)}{1-\overline{\alpha}\overline{F}^{\lambda_j}(x)}\right) \right)\\
			&=&\psi^\prime\left( \sum\limits_{j\neq l}^{m}\phi\left( \frac{\alpha\overline{F}^{\lambda_j}(x)}{1-\overline{\alpha}\overline{F}^{\lambda_j}(x)}\right) \right)-\psi^\prime\left( \sum\limits_{j\neq k}^{m}\phi\left( \frac{\alpha\overline{F}^{\lambda_j}(x)}{1-\overline{\alpha}\overline{F}^{\lambda_j}(x)}\right) \right)\\
			&\geq&0
		\end{eqnarray*}
		
		Therefore, $P_1(Q_1-V_1)+(P_1-U_1)V_1\geq0$, and 
		the desired result obtains by lemma 1.
	\end{proof}	
	\begin{remark}
		The result of Theorem \ref{tp} improves Theorem 1 of \cite{zhang2021ordering} from the extreme order statistics to the case of the second-order statistics. 
	\end{remark}
	
	Next, we give a numerical example to illustrate the result of Theorem \ref{tp}.
	\begin{example}
		Consider the case of $n=4$. Let $\overline{F}(x)=e^{-(ax)^b}, a>0, b>0$, and generator $\psi(x)=e^{\frac{1-e^x}{\theta}}, 0<\theta\leq1.$ Set $\alpha=0.8, a=1.2, b=0.5, \theta=0.1,\lambda=(0.2,0.4,0.8,1.3)\overset{\rm w}{\succeq}(0.3,0.3,1.5,1.6)=\mu.$ Let $N_1$ be an integer-valued random variable having the probability distribution $\mathbb{P}(N_1=1)=0.05, \mathbb{P}(N_1=2)=0.2, \mathbb{P}(N_1=3)=0.3, \mathbb{P}(N_1=4)=0.45$, and $N_2$ be another integer-valued random variable with the probability distribution $\mathbb{P}(N_2=1)=0.05, \mathbb{P}(N_2=2)=0.2, \mathbb{P}(N_2=3)=0.35, \mathbb{P}(N_2=4)=0.4$. It is easy to check that the conditions of Theorem \ref{tp} are all statisfied. The plots of survival functions of $X_{2:4}$ and $Y_{2:4}$, denoted as $\overline{F}_{X_{2:4}}$ and $\overline{F}_{Y_{2:4}}$, are plotted in figure 1, all $x=-{\rm ln}(u)$ and $u\in (0,1]$. Thus, the effectiveness of Theorem \ref{tp} are vaidated.
		\begin{figure}[H]
			\centering
			
			{\includegraphics[width=7cm]{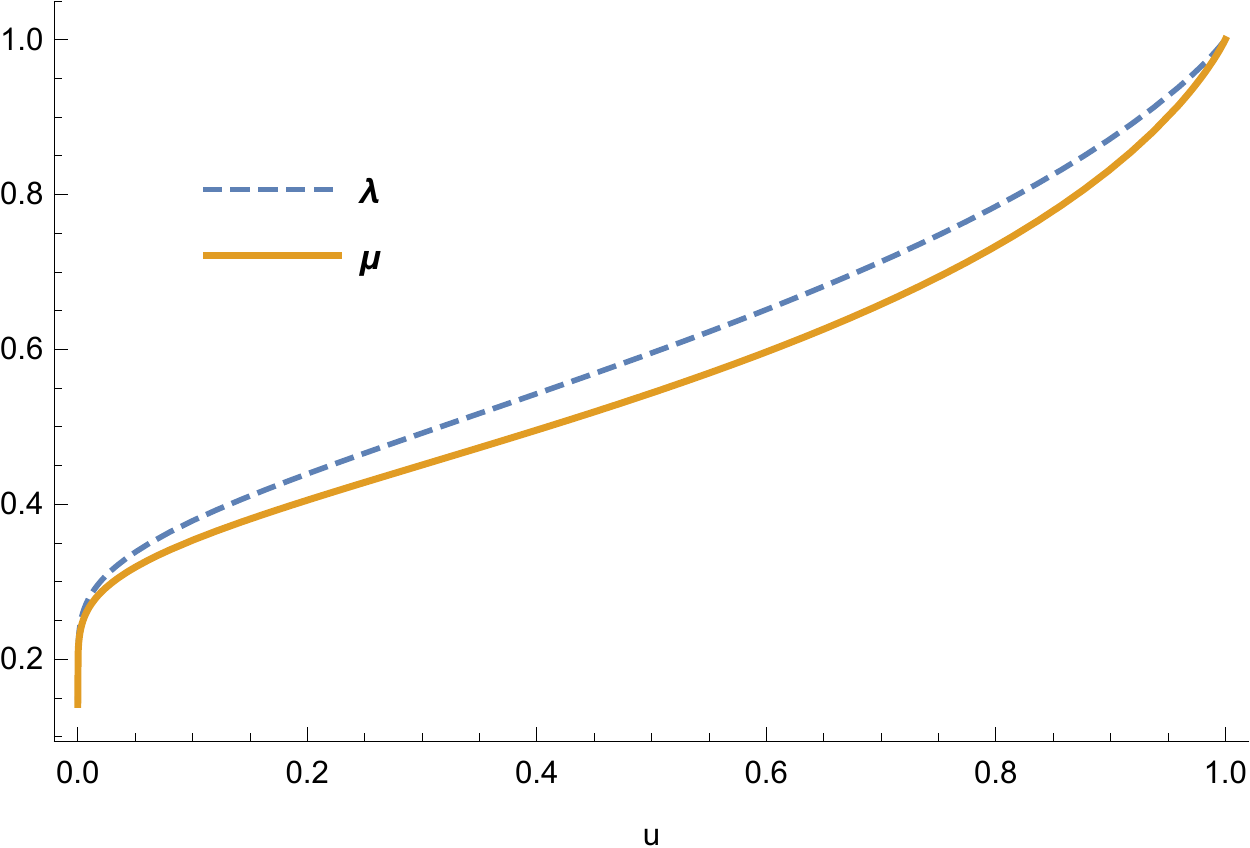}\label{fig2a}}
			
			\caption{Plots of  survival function of $X_{2:4}[Y_{2:4}]$, for all $x=-\ln u$ and  $u\in(0,1]$.
			}
		\end{figure}
	\end{example}
	
	The following theorem presents the usual stochastic order between the second-order statistics arising from two sets of random variables with Archimedean copula, here, we assume that the two samples have common modified proportional hazard rates parameters.
	\begin{theorem}\label{tla}
		Let $X_1,X_2\dots,X_n$ be n statistically dependent heterogeneous random variables with $X_i\thicksim MPHR(\alpha_i,\lambda;\overline{F};\psi)$, where $0<\alpha_i\leq1,$ for all $i=1,2,\dots,n,$ and let $N_1$ be a non-negative integer-valued random variable independent of $ X_{i}'$s. Let $Y_1,Y_2\dots,Y_n$ be another n statistically dependent heterogeneous random variables with $Y_i\thicksim MPHR(\beta_i,\lambda;\overline{F};\psi)$, where $0<\beta_i\leq1,$ for all $i=1,2,\dots,n,$ and let $N_2$ be another non-negative integer-valued random variable independent of $ Y_{i}'$s. Suppose $\alpha,\beta\in\mathcal D_+$ $(         or
		\ \mathcal{I_+})$, if $N_1\geq_{\rm st}N_2,$ and $\psi$ is log-concave. Then we have
		\begin{equation*}
			\frac{1}{\alpha}\overset{\rm w}{\succeq}\frac{1}{\beta}\Rightarrow X_{2:N_1}\geq_{\rm st}Y_{2:N_2}
		\end{equation*}
	\end{theorem}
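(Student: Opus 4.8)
The plan is to reproduce the argument used for Theorem~\ref{tp}, the only structural change being that the common hazard‑rate parameter $\lambda$ is now held fixed while the tilt vector enters through its coordinatewise reciprocal $1/\alpha$. Writing $\overline{\alpha}_i=1-\alpha_i$ and $\phi=\psi^{-1}$, the survival function of $X_{2:n}$ under the Archimedean survival copula $\psi$ is
\begin{equation*}
\overline{F}_{X_{2:n}}(x)=\sum_{i=1}^{n}\psi\!\left(\sum_{j\neq i}^{n}\phi\!\left(\frac{\alpha_j\overline{F}^{\lambda}(x)}{1-\overline{\alpha}_j\overline{F}^{\lambda}(x)}\right)\right)-(n-1)\,\psi\!\left(\sum_{i=1}^{n}\phi\!\left(\frac{\alpha_i\overline{F}^{\lambda}(x)}{1-\overline{\alpha}_i\overline{F}^{\lambda}(x)}\right)\right).
\end{equation*}
Conditioning on $N_1$ and $N_2$ exactly as in Theorem~\ref{tp} and using $N_1\geq_{\rm st}N_2$, it suffices to prove $\overline{F}_{X_{2:m}}(x)\geq\overline{F}_{Y_{2:m}}(x)$ for every fixed $m\in\{1,\dots,n\}$ and every $x$.

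For the fixed‑$m$ comparison, freeze $x$, put $v=\overline{F}^{\lambda}(x)\in(0,1)$ and $s_i=1/\alpha_i$, so that the $i$-th marginal survival function is
\begin{equation*}
b_i=b(s_i):=\frac{\alpha_i v}{1-\overline{\alpha}_i v}=\frac{v}{s_i(1-v)+v},\qquad b_i^{2}=v^{2}\cdot\frac{\alpha_i^{2}}{(1-v+v\alpha_i)^{2}};
\end{equation*}
by Lemma~\ref{proof} with $p=v$ the last expression is increasing in $\alpha_i$, i.e.\ $b$ is strictly decreasing in $s_i$. Viewing $\overline{F}_{X_{2:m}}(x)$ as a function $\Phi$ of $\bm s=(s_1,\dots,s_m)$ on $\mathcal D_m$ (we present the case $\bm\alpha,\bm\beta\in\mathcal I_+$, so that $1/\bm\alpha,1/\bm\beta\in\mathcal D_+$; the other case is symmetric), the target inequality $\overline{F}_{X_{2:m}}(x)\geq\overline{F}_{Y_{2:m}}(x)$ reads $\Phi(1/\bm\alpha)\geq\Phi(1/\bm\beta)$ whenever $1/\bm\alpha\overset{\rm w}{\succeq}1/\bm\beta$, which by Lemma~\ref{wconvex} amounts to $0\geq\Phi_{(1)}(\bm s)\geq\dots\geq\Phi_{(m)}(\bm s)$ on $\mathcal D_m$. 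A chain‑rule computation gives $\Phi_{(k)}(\bm s)=C_k B_k$ with $C_k=\phi'(b_k)\,b'(s_k)>0$ (a product of two negative factors) and
\begin{equation*}
B_k=\sum_{i\neq k}^{m}\psi'\!\left(\sum_{j\neq i}^{m}\phi(b_j)\right)-(m-1)\,\psi'\!\left(\sum_{i=1}^{m}\phi(b_i)\right).
\end{equation*}
Because $\phi\geq 0$ and $\psi$ is convex (so $\psi'$ is increasing), each of the $m-1$ summands in $B_k$ is $\leq\psi'(\sum_{i}\phi(b_i))$, whence $B_k\leq 0$ and $\Phi_{(k)}(\bm s)\leq 0$.

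The substance is the ordering $\Phi_{(k)}(\bm s)\geq\Phi_{(l)}(\bm s)$ for $1\leq k<l\leq m$, i.e.\ $s_k\geq s_l$ and hence $b_k\leq b_l$. Decompose
\begin{equation*}
\Phi_{(k)}(\bm s)-\Phi_{(l)}(\bm s)=C_k\,(B_k-B_l)+(C_k-C_l)\,B_l.
\end{equation*}
In $B_k-B_l$ the $(m-1)\psi'(\cdot)$ pieces cancel, leaving $\psi'(\sum_{j\neq l}\phi(b_j))-\psi'(\sum_{j\neq k}\phi(b_j))$; since $b_k\leq b_l$ gives $\phi(b_k)\geq\phi(b_l)$ and hence $\sum_{j\neq l}\phi(b_j)\geq\sum_{j\neq k}\phi(b_j)$, convexity of $\psi$ makes this $\geq 0$, and $C_k>0$ yields $C_k(B_k-B_l)\geq 0$. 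For the second term, $B_l\leq 0$, so it suffices that $C_k\leq C_l$. Using $\phi'(b)=1/\psi'(\phi(b))$, $b=\psi(\phi(b))$ and $b'(s)=-\tfrac{1-v}{v}\,b^{2}$, one has $C_k=C(b_k)$ with
\begin{equation*}
C(b)=-\frac{1-v}{v}\,b^{2}\phi'(b)=-\frac{1-v}{v}\,b\cdot\frac{\psi(\phi(b))}{\psi'(\phi(b))}.
\end{equation*}
By log‑concavity of $\psi$, the map $u\mapsto\psi(u)/\psi'(u)$ is increasing; composing with the decreasing map $\phi$ shows $b\mapsto\psi(\phi(b))/\psi'(\phi(b))$ is decreasing and negative, and multiplying by the increasing positive factor $b$ keeps the product decreasing, so $C(b)$ is increasing in $b$. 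Since $b_k\leq b_l$ this gives $C_k\leq C_l$, hence $(C_k-C_l)B_l\geq 0$; adding the two pieces yields $\Phi_{(k)}(\bm s)\geq\Phi_{(l)}(\bm s)$, and Lemma~\ref{wconvex} completes the proof.

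The only genuine obstacle I anticipate is the monotonicity of $C(b)$ just used: this is the single place where the modified‑proportional‑hazard reparametrization must be made to cooperate with the log‑concavity of the generator. The identity $b=\psi(\phi(b))$ (equivalently Lemma~\ref{proof}) is what converts the unwieldy factor $b^{2}/\psi'(\phi(b))$ into a product of two monotone factors whose signs align, and log‑concavity of $\psi$ is exactly what makes the factor $\psi/\psi'$ monotone in the required direction; everything else is a routine transcription of the proof of Theorem~\ref{tp}.
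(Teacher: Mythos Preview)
Your proposal is correct and follows essentially the same route as the paper: you reduce to fixed $m$ via $N_1\ge_{\rm st}N_2$, write the partial derivative as a product of a scalar factor and a bracket term, use the convexity of $\psi$ for the $B_k-B_l$ piece, and use log-concavity of $\psi$ for the $C_k-C_l$ piece before invoking Lemma~\ref{wconvex}. The only cosmetic difference is that you package the scalar factor as $C(b)=-\tfrac{1-v}{v}\,b\cdot\psi(\phi(b))/\psi'(\phi(b))$ and argue its monotonicity in $b$ directly, whereas the paper further splits $P_2-U_2$ into two summands before bounding each; the content is identical.
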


	\begin{proof}
		We present the proof for $\alpha,\beta\in \mathcal{I}_+$. The proof for the other case is similar, and hence not presented here. The survival function of $X_{2:n}$ can be expressed as 
		\begin{equation*}
			\overline{F}_{X_{2:n}}(x)=\sum\limits_{i=1}^{n}\psi\left( \sum\limits_{j\neq i}^{n}\phi\left( \frac{\alpha_j\overline{F}^{\lambda}(x)}{1-\overline{\alpha_j}\overline{F}^{\lambda}(x)}\right) \right) -(n-1)\psi\left(\sum\limits_{i=1}^{n}\phi\left(  \frac{\alpha_i\overline{F}^{\lambda}(x)}{1-\overline{\alpha_i}\overline{F}^{\lambda}(x)}\right)  \right).
		\end{equation*}
		Similarly, to obtain the desired result, it is then sufficient to show that $\overline{F}_{X_{2:m}}(x)\geq\overline{F}_{Y_{2:m}}(x),\ m=1,\dots,n.$
		Let $a_k=\frac{1}{\alpha_k}$, for $k=1,\dots,m$. On differentiating this partially with respect to $a_k,$ we get
		\begin{eqnarray*}
			&&\frac{\partial\overline{F}_{X_{2:m}}(x)}{\partial a_k}\\
			&=&\frac{\psi\left( \phi\left( \frac{\frac{1}{a_k}\overline{F}^{\lambda}(x)}{1-(1-\frac{1}{a_k})\overline{F}^{\lambda}(x)}\right)\right) }{\psi^\prime\left( \phi\left( \frac{\frac{1}{a_k}\overline{F}^{\lambda}(x)}{1-(1-\frac{1}{a_k})\overline{F}^{\lambda}(x)}\right)\right)}\frac{-\frac{1}{a_k}[1-\overline{F}^{\lambda}(x)]}{1-(1-\frac{1}{a_k})\overline{F}^{\lambda}(x)}\\
			&&\times\left[ \sum\limits_{i\neq k}^{m}\psi^\prime\left( \sum\limits_{j\neq i}^{m}\phi\left( \frac{\frac{1}{a_j}\overline{F}^{\lambda}(x)}{1-(1-\frac{1}{a_j})\overline{F}^{\lambda}(x)}\right) \right) -(m-1)\psi^\prime\left(\sum\limits_{i=1}^{m}\phi\left( \frac{\frac{1}{a_i}\overline{F}^{\lambda}(x)}{1-(1-\frac{1}{a_i})\overline{F}^{\lambda}(x)}\right)\right)\right]\\
			&\leq&0.
		\end{eqnarray*}
		Notice that
		\begin{eqnarray*}
			&&\frac{\partial\overline{F}_{X_{2:m}}(x)}{\partial a_k}-\frac{\partial\overline{F}_{X_{2:m}}(x)}{\partial a_l}\\
			&=&\frac{\psi\left( \phi\left( \frac{\frac{1}{a_k}\overline{F}^{\lambda}(x)}{1-(1-\frac{1}{a_k})\overline{F}^{\lambda}(x)}\right)\right) }{\psi^\prime\left( \phi\left( \frac{\frac{1}{a_k}\overline{F}^{\lambda}(x)}{1-(1-\frac{1}{a_k})\overline{F}^{\lambda}(x)}\right)\right)}\frac{-\frac{1}{a_k}[1-\overline{F}^{\lambda}(x)]}{1-(1-\frac{1}{a_k})\overline{F}^{\lambda}(x)}\\
			&&\times\left[ \sum\limits_{i\neq k}^{m}\psi^\prime\left( \sum\limits_{j\neq i}^{m}\phi\left( \frac{\frac{1}{a_j}\overline{F}^{\lambda}(x)}{1-(1-\frac{1}{a_j})\overline{F}^{\lambda}(x)}\right) \right) -(m-1)\psi^\prime\left(\sum\limits_{i=1}^{m}\phi\left( \frac{\frac{1}{a_i}\overline{F}^{\lambda}(x)}{1-(1-\frac{1}{a_i})\overline{F}^{\lambda}(x)}\right)\right)\right]\\
			&&-\frac{\psi\left( \phi\left( \frac{\frac{1}{a_l}\overline{F}^{\lambda}(x)}{1-(1-\frac{1}{a_l})\overline{F}^{\lambda}(x)}\right)\right) }{\psi^\prime\left( \phi\left( \frac{\frac{1}{a_l}\overline{F}^{\lambda}(x)}{1-(1-\frac{1}{a_l})\overline{F}^{\lambda}(x)}\right)\right)}\frac{-\frac{1}{a_l}[1-\overline{F}^{\lambda}(x)]}{1-(1-\frac{1}{a_l})\overline{F}^{\lambda}(x)}\\
			&&\times\left[ \sum\limits_{i\neq l}^{m}\psi^\prime\left( \sum\limits_{j\neq i}^{m}\phi\left( \frac{\frac{1}{a_j}\overline{F}^{\lambda}(x)}{1-(1-\frac{1}{a_j})\overline{F}^{\lambda}(x)}\right) \right) -(m-1)\psi^\prime\left(\sum\limits_{i=1}^{m}\phi\left( \frac{\frac{1}{a_i}\overline{F}^{\lambda}(x)}{1-(1-\frac{1}{a_i})\overline{F}^{\lambda}(x)}\right)\right)\right]\\
			&=:&P_2Q_2-U_2V_2=P_2(Q_2-V_2)+(P_2-U_2)V_2.
		\end{eqnarray*}
		For any $1\leq k<l\leq m$, $a_k\geq a_l$, by the log-concave and decreasing properties of $\psi$, we have
		\begin{eqnarray*}
			\frac{\psi\left( \phi\left( \frac{\frac{1}{a_k}\overline{F}^{\lambda}(x)}{1-(1-\frac{1}{a_k})\overline{F}^{\lambda}(x)}\right)\right) }{\psi^\prime\left( \phi\left( \frac{\frac{1}{a_k}\overline{F}^{\lambda}(x)}{1-(1-\frac{1}{a_k})\overline{F}^{\lambda}(x)}\right)\right)}\geq \frac{\psi\left( \phi\left( \frac{\frac{1}{a_l}\overline{F}^{\lambda}(x)}{1-(1-\frac{1}{a_l})\overline{F}^{\lambda}(x)}\right)\right) }{\psi^\prime\left( \phi\left( \frac{\frac{1}{a_l}\overline{F}^{\lambda}(x)}{1-(1-\frac{1}{a_l})\overline{F}^{\lambda}(x)}\right)\right)}.
		\end{eqnarray*}
		It holds that 
		\begin{eqnarray*}
			&&(P_2-U_2)V_2\\
			&\overset{\rm sgn}{=}&\frac{\psi\left( \phi\left( \frac{\frac{1}{a_k}\overline{F}^{\lambda}(x)}{1-(1-\frac{1}{a_k})\overline{F}^{\lambda}(x)}\right)\right) }{\psi^\prime\left( \phi\left( \frac{\frac{1}{a_k}\overline{F}^{\lambda}(x)}{1-(1-\frac{1}{a_k})\overline{F}^{\lambda}(x)}\right)\right)}\frac{\frac{1}{a_k}}{1-(1-\frac{1}{a_k})\overline{F}^{\lambda}(x)}-\frac{\psi\left( \phi\left( \frac{\frac{1}{a_l}\overline{F}^{\lambda}(x)}{1-(1-\frac{1}{a_l})\overline{F}^{\lambda}(x)}\right)\right) }{\psi^\prime\left( \phi\left( \frac{\frac{1}{a_l}\overline{F}^{\lambda}(x)}{1-(1-\frac{1}{a_l})\overline{F}^{\lambda}(x)}\right)\right)}\frac{\frac{1}{a_l}}{1-(1-\frac{1}{a_l})\overline{F}^{\lambda}(x)}\\
			&=&\frac{\psi\left( \phi\left( \frac{\frac{1}{a_k}\overline{F}^{\lambda}(x)}{1-(1-\frac{1}{a_k})\overline{F}^{\lambda}(x)}\right)\right) }{\psi^\prime\left( \phi\left( \frac{\frac{1}{a_k}\overline{F}^{\lambda}(x)}{1-(1-\frac{1}{a_k})\overline{F}^{\lambda}(x)}\right)\right)}\left[ \frac{\frac{1}{a_k}}{1-(1-\frac{1}{a_k})\overline{F}^{\lambda}(x)}-\frac{\frac{1}{a_l}}{1-(1-\frac{1}{a_l})\overline{F}^{\lambda}(x)}\right]\\
			&&+\frac{\frac{1}{a_l}}{1-(1-\frac{1}{a_l})\overline{F}^{\lambda}(x)}\left[ \frac{\psi\left( \phi\left( \frac{\frac{1}{a_k}\overline{F}^{\lambda}(x)}{1-(1-\frac{1}{a_k})\overline{F}^{\lambda}(x)}\right)\right) }{\psi^\prime\left( \phi\left( \frac{\frac{1}{a_k}\overline{F}^{\lambda}(x)}{1-(1-\frac{1}{a_k})\overline{F}^{\lambda}(x)}\right)\right)}-\frac{\psi\left( \phi\left( \frac{\frac{1}{a_l}\overline{F}^{\lambda}(x)}{1-(1-\frac{1}{a_l})\overline{F}^{\lambda}(x)}\right)\right) }{\psi^\prime\left( \phi\left( \frac{\frac{1}{a_l}\overline{F}^{\lambda}(x)}{1-(1-\frac{1}{a_l})\overline{F}^{\lambda}(x)}\right)\right)}\right]\\  
			&\geq&0.
		\end{eqnarray*}
		For any $a_k\geq a_l$, owing to $\phi$ is convex and decreasing, we have 
		\begin{eqnarray*}
			\psi^\prime\left( \sum\limits_{j\neq k}^{m}\phi\left( \frac{\frac{1}{a_j}\overline{F}^{\lambda}(x)}{1-(1-\frac{1}{a_j})\overline{F}^{\lambda}(x)}\right) \right)\leq\psi^\prime\left( \sum\limits_{j\neq l}^{m}\phi\left( \frac{\frac{1}{a_j}\overline{F}^{\lambda}(x)}{1-(1-\frac{1}{a_j})\overline{F}^{\lambda}(x)}\right) \right).
		\end{eqnarray*}
		As a result, we have
		\begin{eqnarray*}
			P_2(Q_2-V_2)
			&\overset{\rm sgn}{=}& \sum\limits_{i\neq k}^{m}\psi^\prime\left( \sum\limits_{j\neq i}^{m}\phi\left( \frac{\frac{1}{a_j}\overline{F}^{\lambda}(x)}{1-(1-\frac{1}{a_j})\overline{F}^{\lambda}(x)}\right) \right) - \sum\limits_{i\neq l}^{m}\psi^\prime\left( \sum\limits_{j\neq i}^{m}\phi\left( \frac{\frac{1}{a_j}\overline{F}^{\lambda}(x)}{1-(1-\frac{1}{a_j})\overline{F}^{\lambda}(x)}\right) \right)\\
			&=&\psi^\prime\left( \sum\limits_{j\neq l}^{m}\phi\left( \frac{\frac{1}{a_j}\overline{F}^{\lambda}(x)}{1-(1-\frac{1}{a_j})\overline{F}^{\lambda}(x)}\right) \right)-\psi^\prime\left( \sum\limits_{j\neq k}^{m}\phi\left( \frac{\frac{1}{a_j}\overline{F}^{\lambda}(x)}{1-(1-\frac{1}{a_j})\overline{F}^{\lambda}(x)}\right) \right)\\
			&\geq&0.
		\end{eqnarray*}
		Therefore, the desires result completed by lemma 1.

	\end{proof}
	\begin{remark}
		The result of Theorem \ref{tla} improves Theorem 2 of \cite{zhang2021ordering} from the extreme order statistics to the case of the second-order statistics. 
	\end{remark}
	The next example illustrates result of Theorem \ref{tla}.
	\begin{example}
		Consider the case of $n=4$. Let $\overline{F}(x)=e^{-(ax)^b}, a>0, b>0$, and generator $\psi(x)=e^{1-(1+x)^\theta}, \theta>0.$ Taking $\lambda=0.4, a=0,5, b=0.8, \theta=7,\alpha=(\frac{1}{3},\frac{1}{3},\frac{1}{5},\frac{1}{8})\ and\ \beta=(\frac{1}{5},\frac{1}{6},\frac{1}{7},\frac{1}{9}).$ Evidently, $(\frac{1}{\alpha_1},\frac{1}{\alpha_2},\frac{1}{\alpha_3},\frac{1}{\alpha_4})\overset{\rm w}{\succeq}(\frac{1}{\beta_1},\frac{1}{\beta_2},\frac{1}{\beta_3},\frac{1}{\beta_4}).$ Let $N_1$ be an integer-valued random variable having the probability distribution $\mathbb{P}(N_1=1)=0.05, \mathbb{P}(N_1=2)=0.2, \mathbb{P}(N_1=3)=0.3, \mathbb{P}(N_1=4)=0.45$, and $N_2$ be another integer-valued random variable with the probability distribution $\mathbb{P}(N_2=1)=0.05, \mathbb{P}(N_2=2)=0.2, \mathbb{P}(N_2=3)=0.35, \mathbb{P}(N_2=4)=0.4$, it is apparently  that $N_1\geq_{\rm st}N_2$. The plots of survival functions of $X_{2:4}$ and $Y_{2:4}$, denoted as $\overline{F}_{X_{2:4}}$ and $\overline{F}_{Y_{2:4}}$, are plotted in figure 2, all $x=-{\rm ln}(u)$ and $u\in (0,1]$. Thus, the effectiveness of Theorem \ref{tp} are vaidated.
		\begin{figure}[H]
			\centering
			
			{\includegraphics[width=7cm]{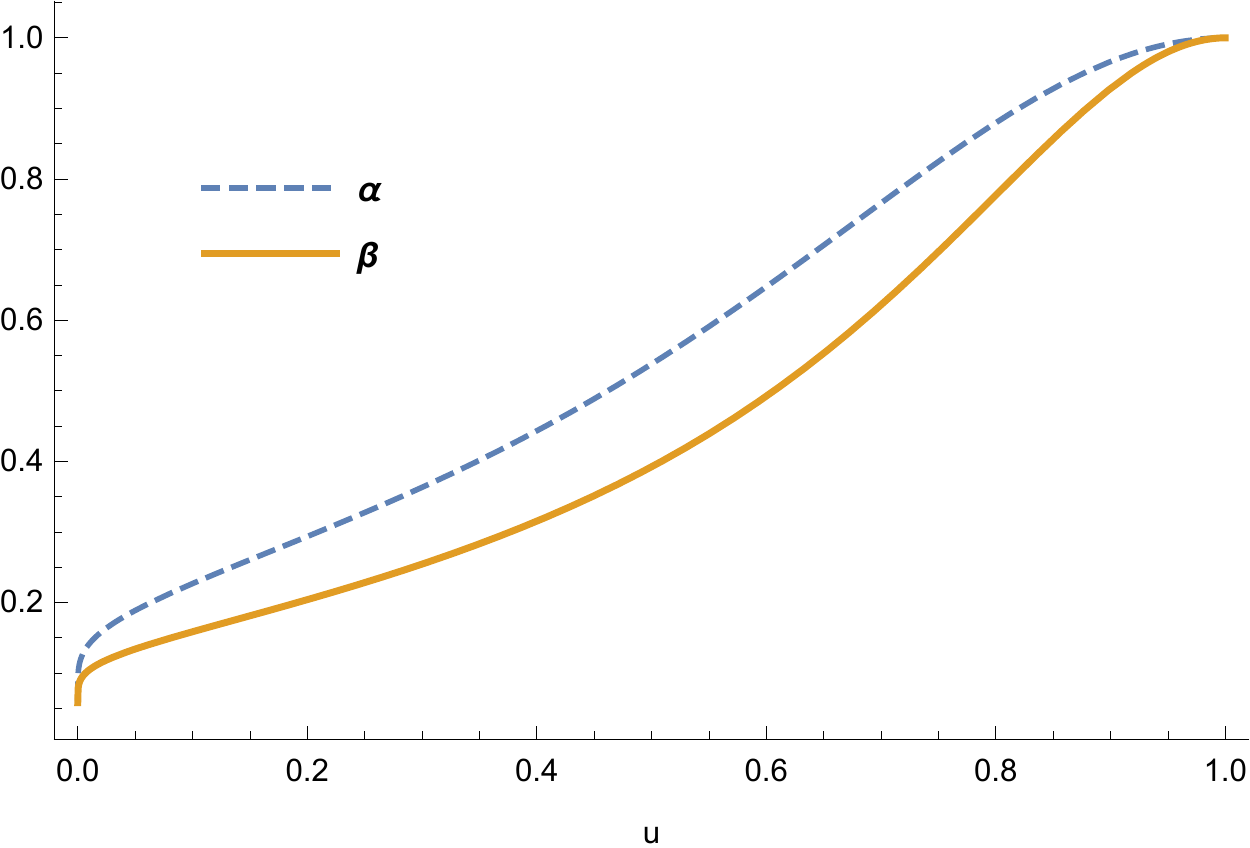}\label{fig2a}}
			
			\caption{Plots of  survival function of $X_{2:4}[Y_{2:4}]$, for all $x=-\ln u$ and  $u\in(0,1]$.
			}
		\end{figure}
	\end{example}

	\section{Hazard rate order of second-order statistics from independent and heterogeneous  observations}\label{hr}
	The next theorem gives sufficient conditions guaranteeing the hazard rate order between two
	modified proportional hazard rates models with the same modified proportional hazard rates parameters
	and the heterogeneous tilt parameters.
	
	\begin{theorem}\label{tpla}
		Let $X_1,X_2\dots,X_n$ be n independent heterogeneous random variables with $X_i\thicksim MPHR(\alpha_i,\lambda;\overline{F})$, where $0<\alpha_i\leq1,$ for all $i=1,2,\dots,n.$ Let $Y_1,Y_2\dots,Y_n$ be another n independent heterogeneous random variables with $Y_i\thicksim MPHR(\beta_i,\lambda;\overline{F})$, where $0<\beta_i\leq1,$ for all $i=1,2,\dots,n$. Suppose $\alpha,\beta\in\mathcal D_+$ $(         or
		\ \mathcal{I_+})$. Then we have
		
		\begin{equation*}
			\frac{1}{\alpha}\overset{\rm m}{\succeq}\frac{1}{\beta}\Rightarrow X_{2:n}\geq_{\rm hr}Y_{2:n}.
		\end{equation*}

	\end{theorem}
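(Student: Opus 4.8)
The plan is to reduce everything to an elementary concavity statement via a single change of variables. Since $X_1,\dots,X_n$ are independent, specialising the survival-function formula used in the proof of Theorem~\ref{tp} to the product copula ($\psi(s)=e^{-s}$, $\phi(u)=-\ln u$) gives
\[
\overline{F}_{X_{2:n}}(x)=\sum_{i=1}^n\prod_{j\neq i}\overline{G}_j(x)-(n-1)\prod_{j=1}^n\overline{G}_j(x),\qquad \overline{G}_j(x)=\frac{\alpha_j\overline{F}^{\lambda}(x)}{1-\overline{\alpha_j}\overline{F}^{\lambda}(x)}.
\]
First I would set $a_j=1/\alpha_j\geq1$ and $b_j=1/\beta_j\geq1$, and introduce $w=w(x):=\overline{F}^{-\lambda}(x)-1=\dfrac{1-\overline{F}^{\lambda}(x)}{\overline{F}^{\lambda}(x)}$, a strictly increasing smooth bijection from the support of $F$ onto $(0,\infty)$. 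A direct computation gives $\overline{G}_j=1/(1+a_jw)$, whence $\sum_i \overline{G}_i^{-1}-(n-1)=\sum_i(1+a_iw)-(n-1)=1+w\sum_i a_i$, and therefore
\[
\overline{F}_{X_{2:n}}(x)=\frac{1+w\sum_j a_j}{\prod_j(1+a_jw)},\qquad \overline{F}_{Y_{2:n}}(x)=\frac{1+w\sum_j b_j}{\prod_j(1+b_jw)}.
\]

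Next I would invoke the hypothesis $\bm{a}:=1/\bm{\alpha}\overset{\rm m}{\succeq}1/\bm{\beta}=:\bm{b}$, which in particular forces $\sum_j a_j=\sum_j b_j$; hence the numerators above cancel and
\[
\frac{\overline{F}_{X_{2:n}}(x)}{\overline{F}_{Y_{2:n}}(x)}=\frac{\prod_j(1+b_jw)}{\prod_j(1+a_jw)}.
\]
Since $X_{2:n}\geq_{\rm hr}Y_{2:n}$ is equivalent to this ratio being increasing in $x$ and $x\mapsto w$ is increasing, it suffices that the ratio be increasing in $w>0$. Taking the logarithmic derivative of the right-hand side, this is exactly
\[
\sum_j\frac{b_j}{1+b_jw}-\sum_j\frac{a_j}{1+a_jw}\geq 0\ \text{ for all } w>0,\qquad\text{i.e.}\qquad \sum_j g(a_j)\leq\sum_j g(b_j),
\]
where $g(a)=a/(1+aw)$.

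Finally, for each fixed $w>0$ one has $g'(a)=(1+aw)^{-2}$ and $g''(a)=-2w(1+aw)^{-3}<0$, so $g$ is concave on $[0,\infty)$. I would then apply Lemma~\ref{kundu} to $h(\bm{a}):=\sum_j g(a_j)$: its partial derivatives $h_{(k)}(\bm{z})=g'(z_k)=(1+z_kw)^{-2}$ are decreasing in $k$ on $\mathcal{I}_+$, so $\bm{a}\overset{\rm m}{\succeq}\bm{b}$ implies $h(\bm{a})\leq h(\bm{b})$ whenever $\bm{a},\bm{b}\in\mathcal{I}_+$, which is the situation $\bm{\alpha},\bm{\beta}\in\mathcal{D}_+$. (Equivalently, $h$ is Schur-concave, so the ordering of the coordinates is immaterial and the case $\bm{\alpha},\bm{\beta}\in\mathcal{I}_+$ follows verbatim.) This delivers the displayed inequality and hence $X_{2:n}\geq_{\rm hr}Y_{2:n}$.

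The only genuine obstacle is the bookkeeping in the first step: one must notice that in the independent case the bracket $\sum_i\overline{G}_i^{-1}-(n-1)$ collapses to the affine expression $1+w\sum_j a_j$ after the $(a,w)$-reparametrisation, and then use the sum-preserving property of majorisation to annihilate the numerators of the two survival functions. With that in hand the hazard-rate comparison becomes the elementary concavity of $a\mapsto a/(1+aw)$ and needs no delicate estimates; attempting the comparison directly on $\overline{F}_{X_{2:n}}$ and its hazard rate, without this collapse, would be considerably messier.
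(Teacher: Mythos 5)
Your argument is correct, and it takes a genuinely different route from the paper's. The paper factors $\overline F_{X_{2:n}}(x)=\prod_i\frac{\alpha_i\overline F^\lambda(x)}{1-\overline\alpha_i\overline F^\lambda(x)}\bigl[\sum_i\frac{1-\overline F^\lambda(x)}{\alpha_i\overline F^\lambda(x)}+1\bigr]$, writes out the hazard rate $r_{X_{2:n}}(x)$ explicitly, and proves that for each fixed $x$ it is Schur-concave in $\bm a=1/\bm\alpha$: the difference of partial derivatives collapses to the monotonicity of $T(x)=x^2/(1-p+px)^2$ (Lemma~\ref{proof}), and Lemma~\ref{kundu} then yields the pointwise comparison $r_{X_{2:n}}(x)\le r_{Y_{2:n}}(x)$. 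You instead reparametrise with $w=\overline F^{-\lambda}(x)-1$ and $a_j=1/\alpha_j$ to get the closed form $\overline F_{X_{2:n}}=\bigl(1+w\sum_j a_j\bigr)/\prod_j(1+a_jw)$, exploit the sum-preserving part of majorization to cancel the numerators, and show the survival-function ratio is increasing in $w$, which reduces to Schur-concavity of $\sum_j a_j/(1+a_jw)$, i.e.\ concavity of $a\mapsto a/(1+aw)$, again closed out with Lemma~\ref{kundu}. Both proofs ultimately rest on a Schur-concavity statement in $1/\bm\alpha$, but yours needs neither the explicit hazard rate nor Lemma~\ref{proof}, and it makes visible exactly where the equality $\sum_j a_j=\sum_j b_j$ enters; the price is that the cancellation is specific to exact majorization (a weak-majorization variant would not cancel the numerators), whereas the paper's pointwise Schur-concavity of the hazard rate is an intermediate fact of independent interest. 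Two cosmetic points: the monotonicity of $x\mapsto w(x)$ only needs $\overline F$ non-increasing (no bijectivity or smoothness), and the standard ratio characterization of $\ge_{\rm hr}$ you invoke is exactly the one in Definition~1(ii), so no extra regularity beyond what the paper itself assumes is required.
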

	
	\begin{proof}
		The idea of the proof is borrowed from Theorem 3.7 of \cite{das2021comparison}.	
		The survival function of $X_{2:n}$ can be rewritten 
		\begin{eqnarray*}
			\overline{F}_{X_{2:n}}(x)&=&\sum\limits^n_{i=1}\prod\limits^n_{j\neq i}\frac{\alpha_j\overline{F}^{\lambda}(x)}{1-\overline{\alpha_j}\overline{F}^{\lambda}(x)}-(n-1)\prod\limits^n_{i=1}\frac{\alpha_i\overline{F}^{\lambda}(x)}{1-\overline{\alpha_i}\overline{F}^{\lambda}(x)}\\
			&=&\prod\limits^n_{i=1}\frac{\alpha_i\overline{F}^{\lambda}(x)}{1-\overline{\alpha_i}\overline{F}^{\lambda}(x)}\left[ \sum\limits^n_{i=1}\frac{1-\overline{\alpha_i}\overline{F}^{\lambda}(x)}{\alpha_i\overline{F}^{\lambda}(x)}-(n-1)\right]\\
			&=&\prod\limits^n_{i=1}\frac{\alpha_i\overline{F}^{\lambda}(x)}{1-\overline{\alpha_i}\overline{F}^{\lambda}(x)}\left[ \sum\limits^n_{i=1}\frac{1-\overline{F}^{\lambda}(x)}{\alpha_i\overline{F}^{\lambda}(x)}+1\right]. 
		\end{eqnarray*}
		Thus, the hazard rate function of the second-order statistic $X_{2:n}$ can be obtained as 
		\begin{eqnarray*}
			r_{X_{2:n}}(x)=\sum\limits^n_{i=1}\frac{\lambda r(x)}{1-\overline{\alpha}_i\overline{F}^{\lambda}(x)}-\frac{\sum\limits^n_{i=1}\frac{\lambda r(x)}{\alpha_i\overline{F}^{\lambda}(x)}}{\sum\limits^n_{i=1}\frac{1-\overline{F}^\lambda(x)}{\alpha_i\overline{F}^{\lambda}(x)}+1},
		\end{eqnarray*}
		where $r(x)$ is the hazard rate function of the baseline distribution. Define
		\begin{eqnarray}\label{one}
			\Phi(a)=r_{X_{2:n}}(x)=\sum\limits^n_{i=1}\frac{\lambda r(x)}{1-(1-\frac{1}{a_i})\overline{F}^{\lambda}(x)}-\frac{\sum\limits^n_{i=1}\frac{a_i\lambda r(x)}{\overline{F}^{\lambda}(x)}}{\sum\limits^n_{i=1}\frac{a_i(1-\overline{F}^\lambda(x))}{\overline{F}^{\lambda}(x)}+1},
		\end{eqnarray}
		where $a_i=\frac{1}{\alpha_i}$, $i=1,\dots,n.$ After differentiating equation (\ref{one}) partially with respect to $a_k,$ we obtain
		\begin{eqnarray}\label{two}
			\frac{\partial \Phi(a) }{\partial a_k}=\frac{\frac{1}{a^2_k}\overline{F}^{\lambda}(x)\lambda r(x)}{\left[1-(1-\frac{1}{a_k})\overline{F}^{\lambda}(x) \right]^2}-\frac{\frac{\lambda r(x)}{\overline{F}^{\lambda}(x)}}{\sum\limits^n_{i=1}\frac{a_i(1-\overline{F}^\lambda(x))}{\overline{F}^{\lambda}(x)}+1}+\frac{\sum\limits^n_{i=1}\frac{a_i\lambda r(x)}{\overline{F}^{\lambda}(x)}\frac{1-\overline{F}^{\lambda}(x)}{\overline{F}^{\lambda}(x)}}{\left[ \sum\limits^n_{i=1}\frac{a_i(1-\overline{F}^\lambda(x))}{\overline{F}^{\lambda}(x)}+1\right]^2 }.
		\end{eqnarray}
		To prove the result, it is sufficient to show that $\Phi(a)$ is schur-concave with respect to $a\in\mathcal{I_+}$ $or\mathcal{D_+}$. Without loss of generality, let $a\in\mathcal{I_+}$, for any $1\leq k<l\leq n,$ $a_k\leq a_l$. Using (\ref{two}), we have
		\begin{eqnarray*}
			\frac{\partial \Phi(a) }{\partial a_k}-\frac{\partial \Phi(a) }{\partial a_l}&\overset{\rm sgn}{=}&\frac{\frac{1}{a^2_k}}{\left[1-(1-\frac{1}{a_k})\overline{F}^{\lambda}(x) \right]^2}-\frac{\frac{1}{a^2_l}}{\left[1-(1-\frac{1}{a_l})\overline{F}^{\lambda}(x) \right]^2}\geq0,
		\end{eqnarray*}
		where the last inequality is due to lemma \ref{proof}. Thus, the result follows from Lemma \ref{kundu}.

	\end{proof}
	The following example illustrates the result of Theorem \ref{tpla}.
	\begin{remark}
		The result of Theorem \ref{tpla} completes Theorem 3 of \cite{zhang2021ordering} from the extreme order statistics to the case of the second-order statistics. 
	\end{remark}
	\begin{example}
		Let $\overline{F}(x)=e^{-(ax)^b}, a>0, b>0$, Set $n=4, \lambda=0.5, a=0.15, b=1.2, \alpha=(\frac{1}{4},\frac{1}{3},\frac{1}{2},1)\ and\ \beta=(\frac{1}{3},\frac{1}{3},\frac{1}{2},\frac{1}{2}).$ Evidently, $(\frac{1}{\alpha_1},\frac{1}{\alpha_2},\frac{1}{\alpha_3},\frac{1}{\alpha_4})\overset{\rm m}{\succeq}(\frac{1}{\beta_1},\frac{1}{\beta_2},\frac{1}{\beta_3},\frac{1}{\beta_4}).$ One can verify that the conditions of Theorem \ref{tpla} are all statisfied. Figure 3 plots the hazard rate functions of $X_{2:4}$ and $Y_{2:4}$, denoted as $r_{X_{2:4}}(x)$ and $r_{Y_{2:4}}(x)$, all $x=-{\rm ln}(u)$ and $u\in (0,1]$. Thus, we can get $X_{2:4}\geq_{\rm hr}Y_{2:4}$.
		\begin{figure}[H]
			\centering
			
			{\includegraphics[width=7cm]{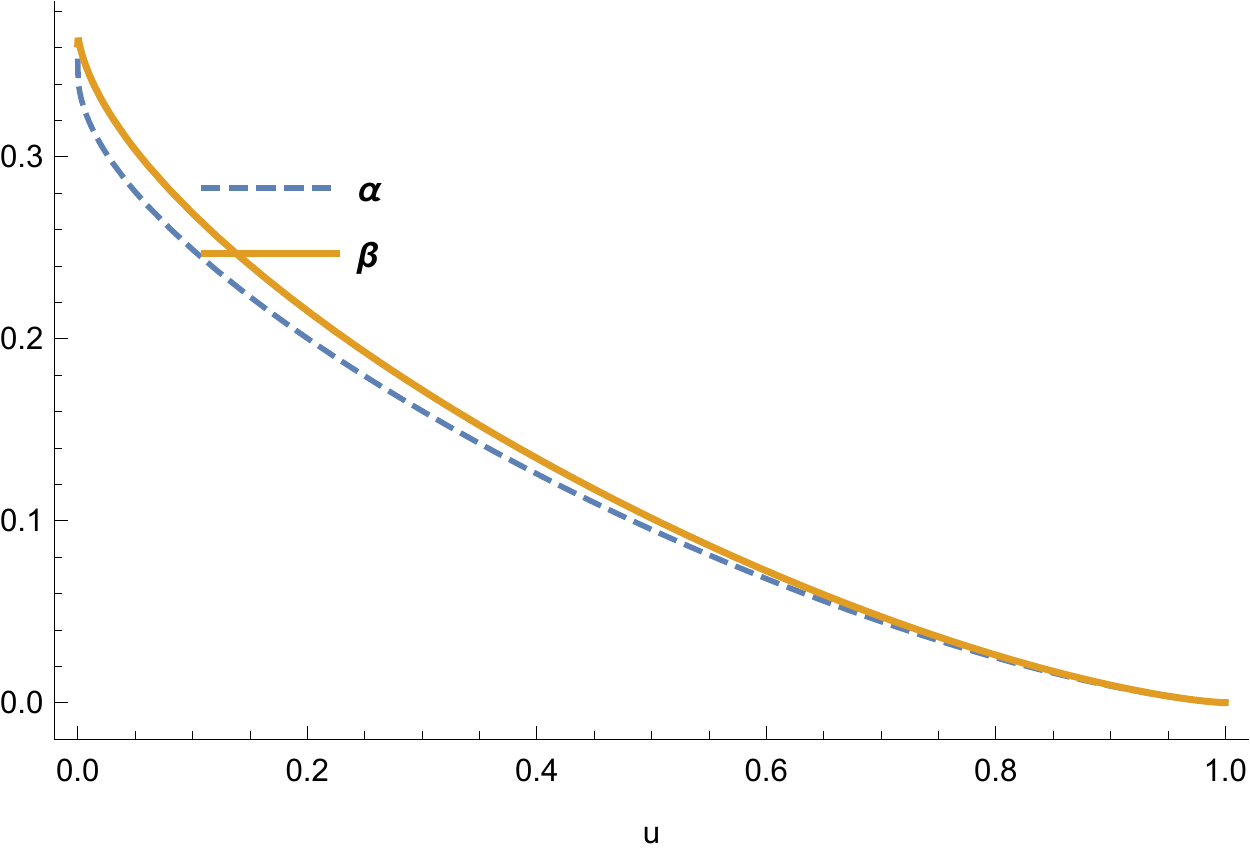}\label{fig2a}}
			
			\caption{Plots of  hazard rate functions of $X_{2:4}[Y_{2:4}]$, for all $x=-\ln u$ and  $u\in(0,1]$.
			}
		\end{figure}
	\end{example}
	
	In the next, we discuss stochastic comparisons on the second-order statistics arising from multiple-outlier MPHR samples in the sense of the hazard rate order. We first present the following result.
	\begin{theorem}\label{thrp}
		Let $X_{1}, X_{2}, \dots,X_{n}$ be independent random variables following the multiple-outlier MPHR moder with survival functions$(\frac{\alpha\overline{F}^{\lambda_1}(x)}{1-\overline{\alpha}\overline{F}^{\lambda_1}(x)}{\rm I}_p,\frac{\alpha\overline{F}^{\lambda}(x)}{1-\overline{\alpha}\overline{F}^{\lambda}(x)}{\rm I}_q),$ where $p+q=n,p,q\geq1,$ $\overline{F}(x)$ is the baseline survival function, and ${\rm I}_p$ stand for a p-dimensional vector with all its components equal to 1.Let $Y_{1}, Y_{2}, \dots,Y_{n}$ be another set of independent random variables following the multiple-outlier MPHR moder with survival functions$(\frac{\alpha\overline{F}^{\lambda_2}(x)}{1-\overline{\alpha}\overline{F}^{\lambda_2}(x)}{\rm I}_p,\frac{\alpha\overline{F}^{\lambda}(x)}{1-\overline{\alpha}\overline{F}^{\lambda}(x)}{\rm I}_q).$ If $\lambda\geq\lambda_2\geq\lambda_1,$ then $X_{2:n}\geq_{\rm hr}Y_{2:n}.$

	\end{theorem}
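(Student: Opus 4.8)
The plan is to reduce the hazard rate comparison to a one-parameter monotonicity statement in the outlier parameter, and then to verify it by differentiation.

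\emph{Closed forms and reduction.} Write $\overline{G}_{1}(x)=\alpha\overline{F}^{\lambda_{1}}(x)/(1-\overline{\alpha}\,\overline{F}^{\lambda_{1}}(x))$ and $\overline{G}_{2}(x)=\alpha\overline{F}^{\lambda}(x)/(1-\overline{\alpha}\,\overline{F}^{\lambda}(x))$, with $\overline{\alpha}=1-\alpha$. Using the multiple-outlier structure in $\overline{F}_{X_{2:n}}(x)=\sum_{i}\prod_{j\neq i}\overline{F}_{X_{j}}(x)-(n-1)\prod_{i}\overline{F}_{X_{i}}(x)$ yields
\[
\overline{F}_{X_{2:n}}(x)=\overline{G}_{1}^{p}(x)\,\overline{G}_{2}^{q}(x)\left[\frac{p}{\overline{G}_{1}(x)}+\frac{q}{\overline{G}_{2}(x)}-(n-1)\right],
\]
and the same expression for $Y_{2:n}$ with $\lambda_{1}$ replaced by $\lambda_{2}$. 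Differentiating $-\log\overline{F}_{X_{2:n}}$, and using $r_{\overline{G}_{i}}=\lambda_{i}r/(1-\overline{\alpha}\,\overline{F}^{\lambda_{i}})$ and $(1/\overline{G}_{i})'=\lambda_{i}r/(\alpha\overline{F}^{\lambda_{i}})$, where $r$ is the baseline hazard rate, a short simplification gives
\[
r_{X_{2:n}}(x)=r(x)\left[\frac{p\lambda_{1}}{1-\overline{\alpha}\,\overline{F}^{\lambda_{1}}(x)}+\frac{q\lambda}{1-\overline{\alpha}\,\overline{F}^{\lambda}(x)}-\frac{p\lambda_{1}\overline{F}^{-\lambda_{1}}(x)+q\lambda\,\overline{F}^{-\lambda}(x)}{p\,\overline{F}^{-\lambda_{1}}(x)+q\,\overline{F}^{-\lambda}(x)-(n-\alpha)}\right]=:r(x)\,h(\lambda_{1};x),
\]
and likewise $r_{Y_{2:n}}(x)=r(x)\,h(\lambda_{2};x)$; the denominator is positive since $p\overline{F}^{-\lambda_{1}}+q\overline{F}^{-\lambda}\ge p+q=n>n-\alpha$. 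As $r(x)>0$, the assertion $X_{2:n}\ge_{\rm hr}Y_{2:n}$ (that is, $r_{X_{2:n}}(x)\le r_{Y_{2:n}}(x)$ for all $x$) is equivalent to $h(\lambda_{1};x)\le h(\lambda_{2};x)$ for all $x$; since $\lambda_{1}\le\lambda_{2}\le\lambda$, it suffices to show that $\theta\mapsto h(\theta;x)$ is nondecreasing on $(0,\lambda]$ for every fixed $x$.

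\emph{Sign of the derivative.} Fix $x$, set $u=\overline{F}(x)\in(0,1)$, $s=u^{\theta}$, $s_{2}=u^{\lambda}$ (so $s'=s\log u$ and $\theta\le\lambda\iff s\ge s_{2}$), and abbreviate $a=p/s$, $b=q/s_{2}$, $c=n-\alpha$, so that $a+b-c\ge p+q-c=\alpha>0$. A direct computation rearranges $\partial h/\partial\theta$ into
\[
\frac{\partial h}{\partial\theta}=p\cdot\frac{(p-1)(1-s)+qs(1-s_{2})/s_{2}}{(1-\overline{\alpha}s)\,s\,(a+b-c)}+a\,(-\log s)\left[\frac{\beta}{(a+b-c)^{2}}-\frac{\overline{\alpha}s^{2}}{(1-\overline{\alpha}s)^{2}}\right],\qquad \beta:=(c-b)+\frac{\lambda b}{\theta}.
\]
The first summand is nonnegative because $p\ge1$; equivalently, $s(a+b-c)=(1-\overline{\alpha}s)+(p-1)(1-s)+qs(1-s_{2})/s_{2}\ge1-\overline{\alpha}s$. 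In the second summand $-\log s>0$, and the hypothesis $\theta\le\lambda$ gives $\lambda/\theta\ge1$, hence $\beta\ge c>0$. It then remains to prove that the whole right-hand side is $\ge0$; this I would do by clearing the two denominators and bounding the resulting numerator using $\beta\ge c$, the displayed identity for $s(a+b-c)$, the elementary inequality $1-\overline{\alpha}v+\overline{\alpha}v\log v>0$ for $v\in(0,1)$ (valid since $0<\alpha\le1$), and Lemma~\ref{proof}. This establishes the monotonicity of $h(\cdot;x)$ on $(0,\lambda]$, and the theorem follows.

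\emph{Main obstacle.} The crux is the last inequality $\partial h/\partial\theta\ge0$. The bracket multiplying $-\log s$ can be negative (roughly, for large $x$), so the two summands need not be individually nonnegative and must be combined with care. This is exactly where the full chain $\lambda_{1}\le\lambda_{2}\le\lambda$ enters: $\lambda_{1}\le\lambda_{2}$ fixes the direction of the comparison, while $\lambda_{2}\le\lambda$ — so that $\theta$ stays at most $\lambda$ over the relevant range — is what forces $\beta>0$; without it the derivative can change sign. The remaining denominator-clearing is routine but somewhat lengthy.
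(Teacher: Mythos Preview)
Your reduction and derivative computation are correct: the hazard-rate comparison does reduce to monotonicity of $\theta\mapsto h(\theta;x)$ on $(0,\lambda]$, and your closed form for $\partial h/\partial\theta$ checks out, including the identity $s(a+b-c)=(1-\overline\alpha s)+(p-1)(1-s)+qs(1-s_{2})/s_{2}$. The problem is that you stop exactly at the step you yourself call the crux. You write that the remaining inequality ``I would do by clearing the two denominators\ldots'' and list $\beta\ge c$, the displayed identity, the bound $1-\overline\alpha v+\overline\alpha v\log v>0$, and Lemma~\ref{proof} as the tools, but you never execute the argument. It is not at all clear that these ingredients close the gap: for instance, feeding $-\log s\le(1-\overline\alpha s)/(\overline\alpha s)$ into your expression still leaves an inequality of the type $\beta(1-\overline\alpha s)\ge\overline\alpha\,s(a+b-c)$ that requires the exact form of $\beta$, not merely $\beta\ge c$, and Lemma~\ref{proof} plays no visible role. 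Since the bracket can be negative, the two summands genuinely interact, and ``routine but somewhat lengthy'' is not a proof. (As a side remark, the inequality $1-\overline\alpha v+\overline\alpha v\log v>0$ holds for all $\alpha>0$, not only $0<\alpha\le1$; the theorem carries no such restriction.)

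The paper avoids this difficulty by a different mechanism. Rather than differentiating in the outlier parameter, it inserts an intermediate quantity $M_{2}$ between $M_{1}=r_{X_{2:n}}$ and $M_{3}=r_{Y_{2:n}}$: in $M_{2}$ the ``weight'' factors $(e^{\lambda_{1}t}-\overline\alpha)/\alpha$ are replaced by $(e^{\lambda_{2}t}-\overline\alpha)/\alpha$, while the ``rate'' factors $\lambda_{1}e^{\lambda_{1}t}/(e^{\lambda_{1}t}-\overline\alpha)$ are kept. Then $M_{2}\le M_{3}$ is immediate from the monotonicity of $\lambda\mapsto\lambda e^{\lambda t}/(e^{\lambda t}-\overline\alpha)$, and $M_{1}\le M_{2}$ reduces to monotonicity of a single M\"obius map $x\mapsto(Qx+P)/(Vx+U)$, verified via the sign of $QU-PV$. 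Each step is short and sign-transparent, with no cancellation between terms of opposite sign. Either carry your derivative bound through in full detail, or adopt this two-step comparison.
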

	
	\begin{proof}
		The idea of the proof is borrowed from Theorem 3.1 of \cite{cai2017hazard}.
		The survival function of $X_{2:n}$ and $Y_{2:n}$ are given by 
		\begin{eqnarray*}
			\overline{F}_{X_{2:n}}(x)&=&p\left[ \frac{\alpha\overline{F}^{\lambda_1}(x)}{1-\overline{\alpha}\overline{F}^{\lambda_1}(x)}\right]^{p-1}\left[ \frac{\alpha\overline{F}^{\lambda}(x)}{1-\overline{\alpha}\overline{F}^{\lambda}(x)}\right]^q+q\left[ \frac{\alpha\overline{F}^{\lambda_1}(x)}{1-\overline{\alpha}\overline{F}^{\lambda_1}(x)}\right]^p\left[ \frac{\alpha\overline{F}^{\lambda}(x)}{1-\overline{\alpha}\overline{F}^{\lambda}(x)}\right]^{q-1}\\
			&&-(n-1)\left[ \frac{\alpha\overline{F}^{\lambda_1}(x)}{1-\overline{\alpha}\overline{F}^{\lambda_1}(x)}\right]^p\left[ \frac{\alpha\overline{F}^{\lambda}(x)}{1-\overline{\alpha}\overline{F}^{\lambda}(x)}\right]^q,      
		\end{eqnarray*}
		and
		\begin{eqnarray*}
			\overline{F}_{X^*_{2:n}}(x)&=&p\left[ \frac{\alpha\overline{F}^{\lambda_2}(x)}{1-\overline{\alpha}\overline{F}^{\lambda_2}(x)}\right]^{p-1}\left[ \frac{\alpha\overline{F}^{\lambda}(x)}{1-\overline{\alpha}\overline{F}^{\lambda}(x)}\right]^q+q\left[ \frac{\alpha\overline{F}^{\lambda_2}(x)}{1-\overline{\alpha}\overline{F}^{\lambda_2}(x)}\right]^p\left[ \frac{\alpha\overline{F}^{\lambda}(x)}{1-\overline{\alpha}\overline{F}^{\lambda}(x)}\right]^{q-1}\\
			&&-(n-1)\left[ \frac{\alpha\overline{F}^{\lambda_2}(x)}{1-\overline{\alpha}\overline{F}^{\lambda_2}(x)}\right]^p\left[ \frac{\alpha\overline{F}^{\lambda}(x)}{1-\overline{\alpha}\overline{F}^{\lambda}(x)}\right]^q,  
		\end{eqnarray*}
		where $\overline{\alpha}=1-\alpha$ and $p+q=n.$ Let us set $\overline{F}^{\lambda}(x)=e^{-\lambda(-{\rm ln}\overline{F}(x))}$ and $t=-{\rm ln}\overline{F}(x)$. We can then rewritten the survival function $X_{2:n}$ as:
		\begin{eqnarray*}
			\overline{F}_{X_{2:n}}(t)&=&p\left[ \frac{\alpha}{e^{\lambda_1t}-\overline{\alpha}}\right]^{p-1}\left[ \frac{\alpha}{e^{\lambda t}-\overline{\alpha}}\right]^q+q\left[ \frac{\alpha}{e^{\lambda_1t}-\overline{\alpha}}\right]^p\left[ \frac{\alpha}{e^{\lambda t}-\overline{\alpha}}\right]^{q-1}\\
			&&-(n-1)\left[ \frac{\alpha}{e^{\lambda_1t}-\overline{\alpha}}\right]^p\left[ \frac{\alpha}{e^{\lambda t}-\overline{\alpha}}\right]^q, t\geq0.      
		\end{eqnarray*}
		Thus, its hazard rate function is given by 
		\begin{eqnarray*}
			r_{X_{2:n}}(t)&=&\frac{{\rm d}[-{\rm ln}\overline{F}_{X_{2:n}}(t)]}{{\rm d}t} =\frac{pA_1\left( \frac{e^{\lambda_1t}-\overline{\alpha}}{\alpha}\right) +qA_2\left( \frac{e^{\lambda t}-\overline{\alpha}}{\alpha}\right) -(n-1)A}{p\left( \frac{e^{\lambda_1t}-\overline{\alpha}}{\alpha}\right) +q\left( \frac{e^{\lambda t}-\overline{\alpha}}{\alpha}\right) -(n-1)},
		\end{eqnarray*}
		where $A_1=(p-1)\frac{\lambda_1e^{\lambda_1t}}{e^{\lambda_1t}-\overline{\alpha}}+q\frac{\lambda e^{\lambda t}}{e^{\lambda t}-\overline{\alpha}}$, $A_2=p\frac{\lambda_1e^{\lambda_1t}}{e^{\lambda_1t}-\overline{\alpha}}+(q-1)\frac{\lambda e^{\lambda t}}{e^{\lambda t}-\overline{\alpha}}$ and $A=p\frac{\lambda_1e^{\lambda_1t}}{e^{\lambda_1t}-\overline{\alpha}}+q\frac{\lambda e^{\lambda t}}{e^{\lambda t}-\overline{\alpha}}$.
		It suffices to show $r_{X_{2:n}}(t)\leq r_{X^*_{2:n}}(t)$ for $\lambda\geq \lambda_2\geq \lambda_1>0,$ i.e.,
		\begin{eqnarray*}
			\frac{pA_1\left( \frac{e^{\lambda_1t}-\overline{\alpha}}{\alpha}\right) +qA_2\left( \frac{e^{\lambda t}-\overline{\alpha}}{\alpha}\right) -(n-1)A}{p\left( \frac{e^{\lambda_1t}-\overline{\alpha}}{\alpha}\right) +q\left( \frac{e^{\lambda t}-\overline{\alpha}}{\alpha}\right) -(n-1)}\leq\frac{pB_1\left( \frac{e^{\lambda_2t}-\overline{\alpha}}{\alpha}\right) +qB_2\left( \frac{e^{\lambda t}-\overline{\alpha}}{\alpha}\right) -(n-1)B}{p\left( \frac{e^{\lambda_2t}-\overline{\alpha}}{\alpha}\right) +q\left( \frac{e^{\lambda t}-\overline{\alpha}}{\alpha}\right) -(n-1)}.
		\end{eqnarray*}
		where $B_1=(p-1)\frac{\lambda_2e^{\lambda_2t}}{e^{\lambda_2t}-\overline{\alpha}}+q\frac{\lambda e^{\lambda t}}{e^{\lambda t}-\overline{\alpha}}$, $B_2=p\frac{\lambda_2e^{\lambda_2t}}{e^{\lambda_2t}-\overline{\alpha}}+(q-1)\frac{\lambda e^{\lambda t}}{e^{\lambda t}-\overline{\alpha}}$ and $B=p\frac{\lambda_2e^{\lambda_2t}}{e^{\lambda_2t}-\overline{\alpha}}+q\frac{\lambda e^{\lambda t}}{e^{\lambda t}-\overline{\alpha}}$, or equivalently
		\begin{eqnarray*}
			&&\Bigg \{ p\left[ (p-1)\frac{\lambda_1e^{\lambda_1t}}{e^{\lambda_1t}-\overline{\alpha}}+q\frac{\lambda e^{\lambda t}}{e^{\lambda t}-\overline{\alpha}}\right] \left( \frac{e^{\lambda_1t}-\overline{\alpha}}{\alpha}\right)
			+q\left[ p\frac{\lambda_1e^{\lambda_1t}}{e^{\lambda_1t}-\overline{\alpha}}+(q-1)\frac{\lambda e^{\lambda t}}{e^{\lambda t}-\overline{\alpha}}\right] \left( \frac{e^{\lambda t}-\overline{\alpha}}{\alpha}\right)\\
			&&
			- (n-1)\left[ p\frac{\lambda_1e^{\lambda_1t}}{e^{\lambda_1t}-\overline{\alpha}}+q\frac{\lambda e^{\lambda t}}{e^{\lambda t}-\overline{\alpha}}\right] \Bigg \}\times  \frac{1}{p\left( \frac{e^{\lambda_1t}-\overline{\alpha}}{\alpha}\right) +q\left( \frac{e^{\lambda t}-\overline{\alpha}}{\alpha}\right) -(n-1)}\\ 
			&\leq& \Bigg \{ p\left[ (p-1)\frac{\lambda_2e^{\lambda_2t}}{e^{\lambda_2t}-\overline{\alpha}}+q\frac{\lambda e^{\lambda t}}{e^{\lambda t}-\overline{\alpha}}\right] \left( \frac{e^{\lambda_2t}-\overline{\alpha}}{\alpha}\right) 
			+q\left[ p\frac{\lambda_2e^{\lambda_2t}}{e^{\lambda_2t}-\overline{\alpha}}+(q-1)\frac{\lambda e^{\lambda t}}{e^{\lambda t}-\overline{\alpha}}\right] \left( \frac{e^{\lambda t}-\overline{\alpha}}{\alpha}\right) \\
			&&- (n-1)\left[ p\frac{\lambda_2e^{\lambda_2t}}{e^{\lambda_2t}-\overline{\alpha}}+q\frac{\lambda e^{\lambda t}}{e^{\lambda t}-\overline{\alpha}}\right] \Bigg \}\times 	\frac{1}{p\left( \frac{e^{\lambda_2t}-\overline{\alpha}}{\alpha}\right) +q\left( \frac{e^{\lambda t}-\overline{\alpha}}{\alpha}\right) -(n-1)}.	  
		\end{eqnarray*}
		Denote by
		\begin{eqnarray*}
			M_1&=&\frac{1}{p\left( \frac{e^{\lambda_1t}-\overline{\alpha}}{\alpha}\right) +q\left( \frac{e^{\lambda t}-\overline{\alpha}}{\alpha}\right) -(n-1)}\times \Bigg \{ p\left[ (p-1)\frac{\lambda_1e^{\lambda_1t}}{e^{\lambda_1t}-\overline{\alpha}}+q\frac{\lambda e^{\lambda t}}{e^{\lambda t}-\overline{\alpha}}\right] \left( \frac{e^{\lambda_1t}-\overline{\alpha}}{\alpha}\right)\\
			&&+q\left[ p\frac{\lambda_1e^{\lambda_1t}}{e^{\lambda_1t}-\overline{\alpha}}+(q-1)\frac{\lambda e^{\lambda t}}{e^{\lambda t}-\overline{\alpha}}\right] \left( \frac{e^{\lambda t}-\overline{\alpha}}{\alpha}\right)- (n-1)\left[ p\frac{\lambda_1e^{\lambda_1t}}{e^{\lambda_1t}-\overline{\alpha}}+q\frac{\lambda e^{\lambda t}}{e^{\lambda t}-\overline{\alpha}}\right]\Bigg \},
		\end{eqnarray*}
		\begin{eqnarray*}
			M_2&=&\frac{1}{p\left( \frac{e^{\lambda_2t}-\overline{\alpha}}{\alpha}\right) +q\left( \frac{e^{\lambda t}-\overline{\alpha}}{\alpha}\right) -(n-1)}\times \Bigg \{ p\left[ (p-1)\frac{\lambda_1e^{\lambda_1t}}{e^{\lambda_1t}-\overline{\alpha}}+q\frac{\lambda e^{\lambda t}}{e^{\lambda t}-\overline{\alpha}}\right] \left( \frac{e^{\lambda_2t}-\overline{\alpha}}{\alpha}\right)\\
			&&+q\left[ p\frac{\lambda_1e^{\lambda_1t}}{e^{\lambda_1t}-\overline{\alpha}}+(q-1)\frac{\lambda e^{\lambda t}}{e^{\lambda t}-\overline{\alpha}}\right] \left( \frac{e^{\lambda t}-\overline{\alpha}}{\alpha}\right)- (n-1)\left[ p\frac{\lambda_1e^{\lambda_1t}}{e^{\lambda_1t}-\overline{\alpha}}+q\frac{\lambda e^{\lambda t}}{e^{\lambda t}-\overline{\alpha}}\right]\Bigg \},
		\end{eqnarray*}
		\begin{eqnarray*}
			M_3&=&\frac{1}{p\left( \frac{e^{\lambda_2t}-\overline{\alpha}}{\alpha}\right) +q\left( \frac{e^{\lambda t}-\overline{\alpha}}{\alpha}\right) -(n-1)}\times \Bigg \{p\left[ (p-1)\frac{\lambda_2e^{\lambda_2t}}{e^{\lambda_2t}-\overline{\alpha}}+q\frac{\lambda e^{\lambda t}}{e^{\lambda t}-\overline{\alpha}}\right] \left( \frac{e^{\lambda_2t}-\overline{\alpha}}{\alpha}\right)\\
			&&+q\left[ p\frac{\lambda_2e^{\lambda_2t}}{e^{\lambda_2t}-\overline{\alpha}}+(q-1)\frac{\lambda e^{\lambda t}}{e^{\lambda t}-\overline{\alpha}}\right] \left( \frac{e^{\lambda t}-\overline{\alpha}}{\alpha}\right)
			-(n-1)\left[ p\frac{\lambda_2e^{\lambda_2t}}{e^{\lambda_2t}-\overline{\alpha}}+q\frac{\lambda e^{\lambda t}}{e^{\lambda t}-\overline{\alpha}}\right]\Bigg \}.
		\end{eqnarray*}
		For $\lambda\geq\lambda_2\geq\lambda_1>0$, by the increasing properties of $\frac{\lambda e^{\lambda t}}{e^{\lambda t}-\overline{\alpha}}$ with respect to $\lambda$, we have 
		\begin{eqnarray*}
			\frac{\lambda_2 e^{\lambda_2 t}}{e^{\lambda_2 t}-\overline{\alpha}}\geq\frac{\lambda_1 e^{\lambda_1 t}}{e^{\lambda_1 t}-\overline{\alpha}}.
		\end{eqnarray*}
		It holds that 
		\begin{eqnarray*}
			M_3-M_2&\overset{\rm sgn}{=}&p(p-1)\left( \frac{e^{\lambda_2t}-\overline{\alpha}}{\alpha}\right)\left[ \frac{\lambda_2 e^{\lambda_2 t}}{e^{\lambda_2 t}-\overline{\alpha}}-\frac{\lambda_1 e^{\lambda_1 t}}{e^{\lambda_1 t}-\overline{\alpha}}\right]+pq\left( \frac{e^{\lambda t}-\overline{\alpha}}{\alpha}\right)\times \left[ \frac{\lambda_2 e^{\lambda_2 t}}{e^{\lambda_2 t}-\overline{\alpha}}-\frac{\lambda_1 e^{\lambda_1 t}}{e^{\lambda_1 t}-\overline{\alpha}}\right]\\
			&&-p(n-1)\left[ \frac{\lambda_2 e^{\lambda_2 t}}{e^{\lambda_2 t}-\overline{\alpha}}-\frac{\lambda_1 e^{\lambda_1 t}}{e^{\lambda_1 t}-\overline{\alpha}}\right]\\
			&=&p\left[ \frac{\lambda_2 e^{\lambda_2 t}}{e^{\lambda_2 t}-\overline{\alpha}}-\frac{\lambda_1 e^{\lambda_1 t}}{e^{\lambda_1 t}-\overline{\alpha}}\right]\left[ (p-1)\left( \frac{e^{\lambda_2t}-\overline{\alpha}}{\alpha}\right)+q\left( \frac{e^{\lambda t}-\overline{\alpha}}{\alpha}\right)-(n-1)\right]\\
			&\geq&p\left[ \frac{\lambda_2 e^{\lambda_2 t}}{e^{\lambda_2 t}-\overline{\alpha}}-\frac{\lambda_1 e^{\lambda_1 t}}{e^{\lambda_1 t}-\overline{\alpha}}\right]\left[p+q-1-(n-1) \right]\\
			&=&0.   
		\end{eqnarray*}
		Which implies that $M_2\leq M_3$. Let 
		\begin{eqnarray*}
			&&P=q\left[p  \frac{\lambda_1 e^{\lambda_1 t}}{e^{\lambda_1 t}-\overline{\alpha}}+(q-1)\frac{\lambda e^{\lambda t}}{e^{\lambda t}-\overline{\alpha}} \right]\left( \frac{e^{\lambda t}-\overline{\alpha}}{\alpha} \right)-(n-1)\left[ p\frac{\lambda_1 e^{\lambda_1 t}}{e^{\lambda_1 t}-\overline{\alpha}}+q\frac{\lambda e^{\lambda t}}{e^{\lambda t}-\overline{\alpha}}\right]\\
			&&Q= p\left[ (p-1)\frac{\lambda_1 e^{\lambda_1 t}}{e^{\lambda_1 t}-\overline{\alpha}}+q\frac{\lambda e^{\lambda t}}{e^{\lambda t}-\overline{\alpha}} \right]\\
			&&U=q\left( \frac{e^{\lambda t}-\overline{\alpha}}{\alpha} \right)-(n-1)  \quad and \quad V=p.
		\end{eqnarray*}
		So, we have
		\begin{eqnarray*}
			QU-PV&=&pq\left( \frac{e^{\lambda t}-\overline{\alpha}}{\alpha} \right)\left[  \frac{\lambda e^{\lambda t}}{e^{\lambda t}-\overline{\alpha}}-\frac{\lambda_1 e^{\lambda_1 t}}{e^{\lambda_1 t}-\overline{\alpha}}\right]+p(n-1)\left( \frac{\lambda_1 e^{\lambda_1 t}}{e^{\lambda_1 t}-\overline{\alpha}}\right)\geq0.  
		\end{eqnarray*}
		So,we have 
		\begin{eqnarray*}
			M_2-M_1&=&\frac{Q\left( \frac{e^{\lambda_2t}-\overline{\alpha}}{\alpha}\right)+P}{V\left( \frac{e^{\lambda_2t}-\overline{\alpha}}{\alpha}\right)+U}-\frac{Q\left( \frac{e^{\lambda_1t}-\overline{\alpha}}{\alpha}\right)+P}{V\left( \frac{e^{\lambda_1t}-\overline{\alpha}}{\alpha}\right)+U}\\
			&=&\frac{\left[Q\left( \frac{e^{\lambda_2t}-\overline{\alpha}}{\alpha}\right)+P \right]\left[V\left( \frac{e^{\lambda_1t}-\overline{\alpha}}{\alpha}\right)+U \right]-\left[Q\left( \frac{e^{\lambda_1t}-\overline{\alpha}}{\alpha}\right)+P \right]\left[ V\left( \frac{e^{\lambda_2t}-\overline{\alpha}}{\alpha}\right)+U\right]    }{\left[  V\left( \frac{e^{\lambda_2t}-\overline{\alpha}}{\alpha}\right)+U\right] \left[  V\left( \frac{e^{\lambda_1t}-\overline{\alpha}}{\alpha}\right)+U\right] } \\
			&\overset{\rm sgn}{=}&(QU-PV)\left(  \frac{e^{\lambda_2t}-\overline{\alpha}}{\alpha}- \frac{e^{\lambda_1t}-\overline{\alpha}}{\alpha}\right)\\
			&\geq&0, 
		\end{eqnarray*}
		which means that $M_1\leq M_2$. Thus, $M_1\leq M_3$, and the theorem follows.
		
	\end{proof}

	In the next, we turn to discuss the effect generated by the discrepancy among sample sizes on the hazard rate function of the second-order statistic arising from multiple-outlier MPHR samples.
	\begin{theorem}\label{thrf}
		Let $X_{1}, X_{2}, \dots,X_{p},X_{p+1},\dots,X_{n}$ be independentrandom variables following the multiple-outlier MPHR moder with survival functions$(\frac{\alpha\overline{F}^{\lambda_1}(x)}{1-\overline{\alpha}\overline{F}^{\lambda_1}(x)}{\rm I}_p,\frac{\alpha\overline{F}^{\lambda_2}(x)}{1-\overline{\alpha}\overline{F}^{\lambda_2}(x)}{\rm I}_q),$ where $p+q=n,p,q\geq1$ and  $\overline{F}(x)$ is the baseline survival function.Let $Y_{1}, Y_{2}, \dots,Y_{p^*},Y_{p^*+1},\dots,Y_{n^*}$ be another set of independent random variables following the multiple-outlier MPHR moder with survival functions$(\frac{\alpha\overline{F}^{\lambda_1}(x)}{1-\overline{\alpha}\overline{F}^{\lambda_1}(x)}{\rm I}_p^*,\frac{\alpha\overline{F}^{\lambda_2}(x)}{1-\overline{\alpha}\overline{F}^{\lambda_2}(x)}{\rm I}_q^*),$ where $p^*+q^*=n^*,p^*,q^*\geq1$. Denote by $X_{2:n}$ and $Y_{2:n^*}$ the second-order statistic arising from these two sets of multiple-outlier MPHR models, respectively. Suppose that $p^*\leq p\leq q\leq q^*$ and $\lambda_1\leq\lambda_2$.Then, we have 
		\begin{equation*}
			(p,q)\preceq_{\rm w}(p^*,q^*)\Rightarrow X_{2:n}\geq_{\rm hr} Y_{2:n^*}
		\end{equation*}

	\end{theorem}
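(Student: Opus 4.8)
The plan is to imitate the proof of Theorem~\ref{thrp} (and, in spirit, the sample-size comparison of \cite{cai2017hazard}): reduce the hazard rate order to an inequality between the two hazard rate functions written as explicit rational functions of the outlier multiplicities, and then exploit the weak majorization hypothesis by changing one multiplicity at a time. First I would write the survival function of $X_{2:n}$ from the multiple-outlier MPHR structure exactly as in Theorem~\ref{thrp}, but with the second block governed by $\lambda_2$,
\[
\overline{F}_{X_{2:n}}(x)=p\,G_1^{p-1}G_2^{q}+q\,G_1^{p}G_2^{q-1}-(n-1)\,G_1^{p}G_2^{q},\qquad G_i=\frac{\alpha\overline{F}^{\lambda_i}(x)}{1-\overline{\alpha}\,\overline{F}^{\lambda_i}(x)},
\]
and similarly for $Y_{2:n^*}$ with $(p,q)$ replaced by $(p^*,q^*)$. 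Using the substitution $t=-\ln\overline{F}(x)$, so $\overline{F}^{\lambda_i}(x)=e^{-\lambda_i t}$, and the abbreviations $\eta_i=\frac{\lambda_i e^{\lambda_i t}}{e^{\lambda_i t}-\overline{\alpha}}$, $\xi_i=\frac{e^{\lambda_i t}-\overline{\alpha}}{\alpha}$, differentiating $-\ln\overline{F}_{X_{2:n}}$ as in Theorem~\ref{thrp} gives
\[
r_{X_{2:n}}(t)=\frac{p\big[(p-1)\eta_1+q\eta_2\big]\xi_1+q\big[p\eta_1+(q-1)\eta_2\big]\xi_2-(n-1)\big[p\eta_1+q\eta_2\big]}{p\,\xi_1+q\,\xi_2-(n-1)}=:\Psi(p,q),
\]
with $r_{Y_{2:n^*}}(t)=\Psi(p^*,q^*)$ for the same $t$. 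It therefore suffices to prove $\Psi(p,q)\le\Psi(p^*,q^*)$.

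To establish this I would treat $p,q$ as continuous variables on $\{1\le p\le q\}$ and break the passage from $(p,q)$ to $(p^*,q^*)$ into the two moves $(p,q)\to(p^*,q)$ and $(p^*,q)\to(p^*,q^*)$, which are legitimate because $p^*\le p$ and $q\le q^*$. For the first it is enough that $\partial\Psi/\partial p\le 0$ (fewer of the stronger $\lambda_1$-type components raises the hazard rate), and for the second that $\partial\Psi/\partial q\ge 0$; the hypotheses $(p,q)\preceq_{\rm w}(p^*,q^*)$ and $p^*\le p\le q\le q^*$ are exactly what keep both moves inside the admissible region and pin down the relation between $n$ and $n^*$. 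Equivalently, one can route the same conclusion through the $\mathcal I_2$-version of Lemma~\ref{wconvex}, verifying $\Psi_{(1)}(p,q)\ge\Psi_{(2)}(p,q)\ge 0$ on $\{p\le q\}$.

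The main obstacle is the sign analysis of these partial derivatives of $\Psi$. As in the $M_3-M_2$, $QU-PV$, and $M_2-M_1$ computations inside the proof of Theorem~\ref{thrp}, I expect each derivative, after clearing the positive common denominator $p\xi_1+q\xi_2-(n-1)$, to factor as the product of (i) the difference $\eta_2-\eta_1$, which is $\ge 0$ since $\lambda\mapsto\frac{\lambda e^{\lambda t}}{e^{\lambda t}-\overline{\alpha}}$ is increasing and $\lambda_1\le\lambda_2$, and (ii) a bracketed expression that is nonnegative on $\{p,q\ge 1,\ p\le q\}$ once one uses $\xi_i\ge 1$ together with the identity $n=p+q$ to absorb the $-(n-1)$ terms (the same cancellation that drives $M_3-M_2\ge 0$ there). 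With both derivative inequalities in hand, chaining the two moves yields $\Psi(p,q)\le\Psi(p^*,q^*)$, i.e. $r_{X_{2:n}}(t)\le r_{Y_{2:n^*}}(t)$ for all $t\ge 0$, which is precisely $X_{2:n}\ge_{\rm hr}Y_{2:n^*}$.
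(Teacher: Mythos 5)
Your reduction to the function $\Psi(p,q)$ matches the paper, but the heart of your argument fails on a sign. Because the total sample size is $n=p+q$, lowering $p$ to $p^*$ in the move $(p,q)\to(p^*,q)$ removes components outright, and for a second-order statistic fewer components means a stochastically larger lifetime and, pointwise, a \emph{smaller} hazard rate: the paper in fact proves $\partial\Psi/\partial p\ge 0$ (and $\partial\Psi/\partial q\ge 0$), not $\partial\Psi/\partial p\le 0$ as your first step requires. You can see the failure already in the degenerate admissible case $\lambda_1=\lambda_2$, $\alpha=1$, where $\Psi$ is the hazard rate of the second-order statistic of $n$ i.i.d.\ exponentials and is strictly increasing in $n$; then $\Psi(p^*,q)<\Psi(p,q)$, so your first move goes the wrong way and the two-step coordinatewise path cannot deliver $\Psi(p,q)\le\Psi(p^*,q^*)$. (Your parenthetical ``equivalently $\Psi_{(1)}\ge\Psi_{(2)}\ge 0$'' even contradicts your main claim that $\partial\Psi/\partial p\le 0$, and as written it also orders the partials the wrong way.)

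The missing ingredient is precisely the Schur-type comparison that the weak-majorization machinery demands: besides $\partial\Psi/\partial p\ge0$ and $\partial\Psi/\partial q\ge0$ one must show
\begin{equation*}
\frac{\partial\Psi(p,q)}{\partial q}\;\ge\;\frac{\partial\Psi(p,q)}{\partial p},
\end{equation*}
which is where the hypotheses $p\le q$, $\lambda_1\le\lambda_2$ (through $\eta_2\ge\eta_1$, $\xi_2\ge\xi_1\ge1$) actually enter; your hypotheses are otherwise never used except to ``stay in the region.'' With these three inequalities, Lemma~\ref{wconvex} applied to the decreasing arrangement $(q,p)\preceq_{\rm w}(q^*,p^*)$ shows that the loss from decreasing $p$ is compensated by the (dominant) gain from increasing $q$, which is how the paper concludes. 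Note also that your anticipated structure of the computation is off: the individual partials do not factor through $\eta_2-\eta_1$ (they are nonnegative irrespective of the order of $\lambda_1,\lambda_2$); only the difference of the two partials uses $\eta_2-\eta_1\ge0$ and $\xi_2-\xi_1\ge 0$.
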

	
	\begin{proof}
		The idea of the proof is borrowed from Theorem 3.4 of \cite{cai2017hazard}.
		Following the notation in the proof of Theorem 4, the hazard rate function of $X_{2:n}$ can be written as follows:
		\begin{eqnarray*}
			r_{X_{2:n}}(t)=\frac{pT_1\left( \frac{e^{\lambda_1t}-\overline{\alpha}}{\alpha}\right) +qT_2\left( \frac{e^{\lambda_2 t}-\overline{\alpha}}{\alpha}\right) -(n-1)T}{p\left( \frac{e^{\lambda_1t}-\overline{\alpha}}{\alpha}\right) +q\left( \frac{e^{\lambda_2 t}-\overline{\alpha}}{\alpha}\right) -(n-1)},
		\end{eqnarray*}
		where  $T_1=(p-1)\frac{\lambda_1e^{\lambda_1t}}{e^{\lambda_1t}-\overline{\alpha}}+q\frac{\lambda_{2} e^{\lambda_2 t}}{e^{\lambda_2 t}-\overline{\alpha}}$, $T_2=p\frac{\lambda_1e^{\lambda_1t}}{e^{\lambda_1t}-\overline{\alpha}}+(q-1)\frac{\lambda_2 e^{\lambda_2 t}}{e^{\lambda_2 t}-\overline{\alpha}}$ and $T=p\frac{\lambda_1e^{\lambda_1t}}{e^{\lambda_1t}-\overline{\alpha}}+q\frac{\lambda_2 e^{\lambda_2 t}}{e^{\lambda_2 t}-\overline{\alpha}}$. Denote by $r_{Y_{2:n^*}}(t)$ the hazard rate function of $Y_{2:n^*}$. We need to show that $r_{X_{2:n}}(t)\leq r_{Y_{2:n^*}}(t)$, i.e.,
		\begin{eqnarray*}
			\frac{pT_1\left( \frac{e^{\lambda_1t}-\overline{\alpha}}{\alpha}\right) +qT_2\left( \frac{e^{\lambda_2 t}-\overline{\alpha}}{\alpha}\right) -(n-1)T}{p\left( \frac{e^{\lambda_1t}-\overline{\alpha}}{\alpha}\right) +q\left( \frac{e^{\lambda_2 t}-\overline{\alpha}}{\alpha}\right) -(n-1)}
			\leq\frac{p^*T^*_1\left( \frac{e^{\lambda_1t}-\overline{\alpha}}{\alpha}\right) +q^*T^*_2\left( \frac{e^{\lambda_2 t}-\overline{\alpha}}{\alpha}\right) -(n^*-1)T^*}{p^*\left( \frac{e^{\lambda_1t}-\overline{\alpha}}{\alpha}\right) +q^*\left( \frac{e^{\lambda_2 t}-\overline{\alpha}}{\alpha}\right) -(n^*-1)}.
		\end{eqnarray*}
		where $T^*_1=(p^*-1)\frac{\lambda_1e^{\lambda_1t}}{e^{\lambda_1t}-\overline{\alpha}}+q^*\frac{\lambda_{2} e^{\lambda_2 t}}{e^{\lambda_2 t}-\overline{\alpha}}$, $T^*_2=p^*\frac{\lambda_1e^{\lambda_1t}}{e^{\lambda_1t}-\overline{\alpha}}+(q^*-1)\frac{\lambda_2 e^{\lambda_2 t}}{e^{\lambda_2 t}-\overline{\alpha}}$ and $T^*=p^*\frac{\lambda_1e^{\lambda_1t}}{e^{\lambda_1t}-\overline{\alpha}}+q^*\frac{\lambda_2 e^{\lambda_2 t}}{e^{\lambda_2 t}-\overline{\alpha}}$. Let $a_i=\frac{\lambda_ie^{\lambda_it}}{e^{\lambda_it}-\overline{\alpha}}$, $b_i=\left( \frac{e^{\lambda_it}-\overline{\alpha}}{\alpha}\right)$ and $c_{ij}=a_ib_j$ for $i,j=1,2.$ Denote
		\begin{eqnarray*}
			\phi(p,q)&=&\frac{pT_1\left( \frac{e^{\lambda_1t}-\overline{\alpha}}{\alpha}\right) +qT_2\left( \frac{e^{\lambda_2 t}-\overline{\alpha}}{\alpha}\right) -(n-1)T}{p\left( \frac{e^{\lambda_1t}-\overline{\alpha}}{\alpha}\right) +q\left( \frac{e^{\lambda_2 t}-\overline{\alpha}}{\alpha}\right) -(n-1)}\\
			&=&\frac{p(p-1)c_{11}+pqc_{21}+pqc_{12}+q(q-1)c_{22}-(n-1)(pa_1+qa_2)}{pb_1+qb_2-(n-1)}.
		\end{eqnarray*}
		It suffices to prove $\phi(p,q)\leq\phi(p^*,q^*)$ under the condition $p^*\leq p\leq q\leq q^*$ and $(p,q)\preceq_{\rm w}(p^*,q^*)$. Taking the derivative of $\phi(p,q)$ with respect to $p$, we have
		\begin{eqnarray*}
			\frac{\partial\phi(p,q)}{\partial p}&\overset{\rm sgn}{=}&\left[(2p-1)c_{11}+q(c_{21}+c_{12})-(n-1)a_1-pa_1-qa_2 \right]\times\left[ pb_1+qb_2-(n-1)\right]\\
			&&-\left[ p(p-1)c_{11}+pqc_{21}+pqc_{12}+q(q-1)c_{22}-(n-1)(pa_1+qa_2)\right]\times(b_1-1)\\
			&=&a_1\left[pb_1+qb_2-(n-1)\right]\times\left[ (p-1)b_1+qb_2-(n-1)\right]+(pc_{11}+qc_{22})(b_1-1) \\
			&\geq&0.   
		\end{eqnarray*}	
		Similarly,
		\begin{eqnarray*}
			\frac{\partial\phi(p,q)}{\partial q}&\overset{\rm sgn}{=}&a_2\left[pb_1+qb_2-(n-1)\right]\times\left[ pb_1+(q-1)b_2-(n-1)\right]+(pc_{11}+qc_{22})(b_2-1) \\
			&\geq&0.
		\end{eqnarray*}
		From the proof of Theorem 4, we get $a_2\geq a_1\geq0$ and $b_2\geq b_1\geq1$.Thus, we have
		\begin{eqnarray*}
			\frac{\partial\phi(p,q)}{\partial q}-\frac{\partial\phi(p,q)}{\partial p}&\overset{\rm sgn}{=}&\left[ pc_{21}+(q-1)c_{22}-(p-1)c_{11}-qc_{12}-(n-1)(a_2-a_1)\right]\\
			&&\times \left[pb_1+qb_2-(n-1) \right]+(pc_{11}+qc_{22})(b_2-b_1)\\
			&\geq&\left[ pc_{21}+(q-1)c_{22}-(p-1)c_{11}-qc_{12}-(n-1)(a_2-a_1)\right]\\
			&&\times \left[pb_1+qb_2-(n-1) \right]+(pb_1+qb_2)(a_1b_2-a_1b_1)\\
			&=&\left[ pc_{21}+(q-1)c_{22}-pc_{11}-(q-1)c_{12}-(n-1)(a_2-a_1)\right]\\
			&&\times \left[pb_1+qb_2-(n-1) \right]+(n-1)(c_{12}-c_{11})\\
			&\geq&(a_2-a_1)\left[ pb_1+(q-1)b_2-(n-1)\right]+(n-1)(c_{12}-c_{11})\\
			&\geq&0.  
		\end{eqnarray*}
		Now, the desired result follows from lemma \ref{wconvex}.

	\end{proof}
	\begin{remark}
		The results of theorem \ref{thrp} and theorem \ref{thrf} improve Theorem 3.1 and theorem 3.4 of \cite{cai2017hazard} from the PHR model to the case of the MPHR model. 
	\end{remark}
	
	The following numerical example is provided as an illustration of Theorem \ref{thrf}.
	\begin{example}
		Suppose $\overline{F}(x)=e^{-(ax)^b}, a>0, b>0$, Set $n=7, n^*=9, a=1.5, b=0.2, \alpha=0.05, \lambda_1=0.1, \lambda_2=0.3, p=3, q=4, p^*=1, q^*=8.$ It is easy to show that conditions $\lambda_1\leq\lambda_2, p^*\leq p\leq q\leq q^*$ and $(p,q)\preceq_{\rm w}(p^*,q^*)$ are all statisfied in theorem \ref{thrf}. Figure 4 shows the hazard rate functions of $X_{2:7}$ and $X_{2:9}$, from which it can be obversed that $r_{X_{2:7}}(x)$ is less than $r_{Y_{2:9}}(x)$, for $x>0$, thus validating the result in theorem \ref{thrf}.
		\begin{figure}[H]
			\centering
			
			{\includegraphics[width=7cm]{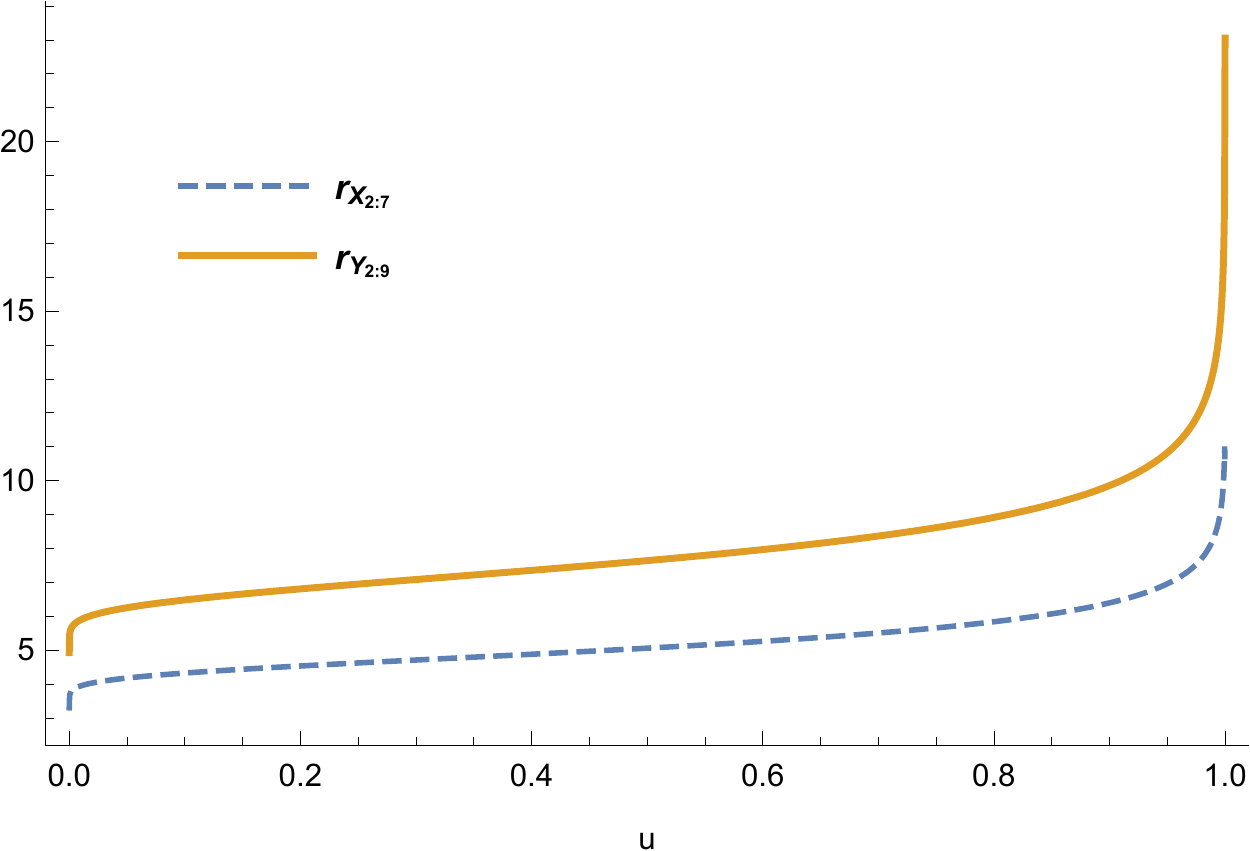}\label{fig2a}}
			
			\caption{Plots of  hazard rate functions of $X_{2:7}[Y_{2:9}]$, for all $x=-\ln u$ and  $u\in(0,1]$.
			}
		\end{figure}
		
	\end{example}

	\section{Concluding remarks}\label{con}
	
	In this paper, we study stochastic comparisons on the second-order statistics from heterogeneous dependent or independent MPHR samples. Some ordering results are established for the usual stochastic, hazard rate orderings on the second-order statistics. These new
	results established here provide theoretical guidance both for the winner’s prize for bid in the second-price reverse auction in auction theory and fail-safe system design in reliability theory.
	
	\section*{Funding}
	This research is supported by the National Natural Science Foundation of China.

	\bibliography{ref-quan}

\end{document}